\documentclass[leqno,11pt,a4paper]{article}
\usepackage[left=2.0cm, top=2.2cm,bottom=2.5cm,right=2.0cm]{geometry}
\usepackage{amsmath,amssymb,amsthm,mathrsfs,calc,graphicx}
\usepackage{tikz}
\usepackage[british]{babel}



\newtheorem {theorem}{Theorem}[section]
\newtheorem {proposition}[theorem]{Proposition}
\newtheorem {lemma}[theorem]{Lemma}
\newtheorem {corollary}[theorem]{Corollary}

{\theoremstyle{definition}

}
{\theoremstyle{theorem}
\newtheorem {remark}[theorem]{Remark}

}

\newcommand{\Cov}{\operatorname{cov}}

\newcommand{\Var}{\operatorname{var}}

\def\ba{\begin{array}}
\def\ea{\end{array}}
\def\bea{\begin{eqnarray} \label}
\def\eea{\end{eqnarray}}
\def\be{\begin{equation} \label}
\def\ee{\end{equation}}
\def\bit{\begin{itemize}}
\def\eit{\end{itemize}}
\def\ben{\begin{enumerate}}
\def\een{\end{enumerate}}


\def\DD{\mathbb{D}}
\def\EE{\mathbb{E}}

\def\NN{\mathbb{N}}
\def\PP{\mathbb{P}}

\def\RR{\mathbb{R}}

\def\ZZ{\mathbb{Z}}

\def\a{\alpha}

\def\g{\gamma}

\def\e{\varepsilon}

\def\l{\lambda}
\def\r{\rho}
\def\s{\sigma}

\def\O{\Omega}


\def\bF{\mathbf{F}}

\def\bX{\mathbf{X}}
\def\bY{\mathbf{Y}}
\def\bZ{\mathbf{Z}}

\def\cA{\mathcal{A}}
\def\cB{\mathcal{B}}

\def\cF{\mathcal{F}}

\def\cH{\mathcal{H}}

\def\cN{\mathcal{N}}

\def\H{\mathfrak{H}}

\def\dint{\textup{d}}

\def\var{{\textup{var}}}
\def\dom{{\textup{dom}}}
\def\s{\otimes}
\def\ts{\widetilde{\otimes}}

\def\fH{\mathfrak{H}}
\def\fC{\mathfrak{C}}
\def\cov{{\textup{cov}}}

\def\one{\mathbf 1}

\parindent0pt

\begin{document}

\title{\bfseries A new quantitative central limit theorem on the \\ Wiener space with applications to Gaussian processes}

\author{Tobias Fissler\footnotemark[1] \and Christoph Th\"ale\footnotemark[2]}

\date{}
\renewcommand{\thefootnote}{\fnsymbol{footnote}}

\footnotetext[1]{University of Bern, Department of Mathematics and Statistics, Institute of Mathematical Statistics and Actuarial Science, CH-3012 Bern, Switzerland. E-mail: \texttt{tobias.fissler@stat.unibe.ch}}
\footnotetext[2]{Ruhr University Bochum, Faculty of Mathematics, D-44780 Bochum, Germany. E-mail: \texttt{christoph.thaele@rub.de}}

\maketitle

\begin{abstract}
Quantitative limit theorems for non-linear functionals on the Wiener space are considered. Given the possibly infinite sequence of kernels of the chaos decomposition of such a functional, an estimate for different probability distances between the functional and a Gaussian random variable in terms of contraction norms of these kernels is derived. The applicability of this result is demonstrated by means of the Breuer-Major theorem, unfolding thereby a new connection between the Hermite rank of the considered function and a chaotic gap. Especially, power variations of the fractional Brownian motion and processes belonging to the Cauchy class are studied.
\bigskip
\\
{\bf Keywords}. {Breuer-Major theorem, central limit theorem, chaos decomposition, chaotic gap, fractional Brownian motion, Gaussian process, Hermite rank, Malliavin-Stein method, power variation, Wiener space}\\
{\bf MSC}. Primary 60F05, 60G15; Secondary 60G22, 60H05, 60H07.
\end{abstract}

\section{Introduction}

Central limit theorems for non-linear functionals of Gaussian random processes (or measures) have triggered an enormous development in probability theory and mathematical statistics during the last decade. A cornerstone in this new area is the so-called fourth moment theorem of Nualart and Peccati. It says that a sequence $I_q(f^{(n)})$ of Gaussian multiple stochastic integrals of a fixed order $q\geq 2$ satisfying the normalization condition $\EE[I_q(f^{(n)})^2]=1$ for all $n\geq 1$ converges in distribution, as $n\to\infty$, to a standard Gaussian random variable $Z$ if and only if the sequence  $\EE[I_q(f^{(n)})^4]$ of their fourth moments converges to $3$, the fourth moment of $Z$. This qualitative limit theorem has been extended by Nourdin and Peccati in \cite{NourdinPeccati09} to a quantitative statement in that the distance between the laws of $I_q(f^{(n)})$ and $Z$ is measured in a suitable probability metric. For example, the total variation distance $d_{TV}(I_q(f^{(n)}),Z)$ between $I_q(f^{(n)})$ and the Gaussian variable $Z$ can be bounded from above by
\begin{equation}\label{eq:RateIntro}
d_{TV}(I_q(f^{(n)}),Z) \leq C\sqrt{\EE[I_q(f^{(n)})]-3}
\end{equation}
with a constant $C\in(0,\infty)$ only depending on $q$. More recently, Nourdin and Peccati \cite{NourdinPeccatiOptimalRates} derived the optimal rate of convergence, removing thereby the square-root in \eqref{eq:RateIntro}. We emphasize that the proof of the estimate \eqref{eq:RateIntro} is based on a combination of Stein's method for normal approximation with the Malliavin calculus of variations on the Wiener space. For further information and background material, we refer the reader to the monograph \cite{NourdinPeccatiBook}.

\medbreak

While the Malliavin-Stein approach provides useful estimates in case of a sequence of random elements living inside a fixed Wiener chaos or inside a finite sum of Wiener chaoses, the bounds become less tractable in cases in which the functionals belong to an infinite sum of Wiener chaoses, that is, if the functional $F$ has the representation
\begin{equation}\label{eq:ChaoticDecompsitionIntro}
F = \sum_{q=0}^\infty I_q(f_q)
\end{equation}
with infinitely many of the functions $f_q$ (called kernels in the sequel) being non-zero.  On the other hand, functionals of this type often appear in concrete applications. Distinguished examples are the number of zeros of a random trigonometric polynomial \cite{AzaisRandTrigPol}, the power and the bi-power variation of a Gaussian random process \cite{BarndorffNielsenEtAl1}, the Euler characteristic of a Gaussian excursion set \cite{EstradeLeon} or the statistics appearing around the Breuer-Major theorem \cite{BiermeBonamiLeon2011,NourdinPeccatiPodolskij}, to name just a few. One way to obtain quantitative central limit theorems in these cases is to apply the so-called second-order Poincar\'e inequality developed by Nourdin, Peccati and Reinert \cite{NourdinPeccatiReinertSecondOrderPoincare}. This method has the advantage that it is not necessary to specify the chaos decomposition of $F$ as at \eqref{eq:ChaoticDecompsitionIntro} explicitly, that is, to compute the functions $f_q$ there explicitly. On the other hand, a major drawback of this approach is that it typically leads to a suboptimal rate of convergence. Moreover, in many situations the kernels $f_q$ are in fact explicitly known and for this reason it is natural to ask for a purely analytical upper bound on the probability distance between $F$ and $Z$ in terms of the sequence of kernels $f_q$. The main goal of the present paper is to provide such an estimate (also for probability metrics different from the total variation distance) and to demonstrate its applicability by means of representative examples related to the classical Breuer-Major theorem. More precisely, we shall look at random variables of the form
$$
F_n={1\over\sqrt{n}}\sum_{k=1}^n\{g(X_k)-\EE[g(X_k)]\}\,,
$$
where $X=(X_k)_{k\in\ZZ}$ is a stationary Gaussian process and $g:\RR\to\RR$ is a suitable function. For example, $X$ could be obtained from the increments of a fractional Brownian motion and $g(x)=|x|^p-\EE|X_1|^p$, $p>0$, in which case $F_n$ becomes a so-called power variation of the fractional Brownian motion. In this context, our quantitative central limit theorem for $F_n$ unfolds a new and unexpected feature, namely that the rates of convergence are influenced by the interplay of the Hermite rank of the function $g$ and what we call the chaotic gap of $F_n$ (in addition to the asymptotic behavior of the covariance function of $X$, of course). We would like to emphasize that in the context of power variations of a fractional Brownian motion we will show that the rate of convergence in the central limit theorem is universal, that is, independent of the parameter $p$, and coincides with the known rate for the quadratic variation, where $p=2$. The same phenomenon also applies to processes that belong to the Cauchy class.

\medskip

Our text is structured as follows. In Section \ref{sec:GaussianAnalysis}, we summarize some basic elements of Gaussian analysis and, in particular, recall the definitions of the four basic operators from Malliavin calculus that are crucial for our theory. Our main result, Theorem \ref{thm:Main}, is presented in Section \ref{sec:MainResults}. Our applications to Gaussian random processes are the content of Section \ref{sec:Application}. Finally, in Section \ref{sec:Multivariate} we present a multivariate exteions of our main result. The Appendix gathers some technical lemmas.

\section{Elements of Gaussian analysis}\label{sec:GaussianAnalysis}

\subsection{Wiener chaos, chaos decomposition and multiplication formula}

We let $\fH$ be a real separable Hilbert space with inner product $\langle\,\cdot\,,\,\cdot\,\rangle_\fH$ and norm $\|\,\cdot\,\|_\fH$. Moreover, for integers $q\geq 1$ we denote by $\fH^{\otimes q}$ the $q$th tensor power and by $\fH^{\odot q}$ the $q$th symmetric tensor power of $\fH$. The space $\fH^{\otimes q}$ is supplied with the canonical scalar product $\langle\,\cdot\,,\,\cdot\,\rangle_{\fH^{\otimes q}}$ and the canonical norm $\|\,\cdot\,\|_{\fH^{\otimes q}}$, while $\fH^{\odot q}$ is equipped with the norm $\sqrt{q!}\|\,\cdot\,\|_{\fH^{\otimes q}}$.

An isonormal Gaussian process $W=\{W(h):h\in\fH\}$ over $\fH$ is a family of Gaussian random variables defined on a common probability space $(\O,\cF,\PP)$ and indexed by the elements of $\fH$ such that
$$
\EE[W(h)]=0\qquad\text{and}\qquad \EE[W(h)W(h')] = \langle h,h'\rangle_{\fH}\,,\qquad h,h'\in\fH\,.
$$
In what follows we will implicitly assume that the $\sigma$-field $\cF$ is generated by $W$, that is, $\cF=\sigma(W)$. Let us write $L^2(\O)$ for the space of square-integrable functions over $\O$.
For integers $q\geq 1$ we denote by $\fC_q$ the $q$th Wiener chaos over $\fH$. That is, $\fC_q$ is the closed linear subspace of $L^2(\O)$ generated by random variables of the form $H_q(W(h))$. Here, $H_q$ is the $q$th Hermite polynomial and $h\in\fH$ satisfies $\|h\|_\fH=1$. Recall that $H_0\equiv 0$ and that
\begin{align}\label{eq:Hermite def}
H_q(x) &= (-1)^q \exp(x^2/2)\,\frac{\dint^q}{\dint x^q}\exp(-x^2/2), \qquad q\ge1\,,\\
\EE[H_q(X)H_p(Y)] &=\begin{cases}
p!\,(\EE[XY])^p &: p=q\\
0 &: \text{otherwise}\,,
\end{cases}
\label{eq:Hermite moments}
\end{align}
for jointly Gaussian $X,Y$ and integers $p,q\geq 1$.
For convenience, we also define $\fC_0:=\RR$. The mapping $h^{\otimes q}\mapsto H_q(W(h))$ can be extended to a linear isometry, denoted by $I_q$, from $\fH^{\odot q}$ to the $q$th Wiener chaos $\fC_q$, see Chapter 2 in \cite{NourdinPeccatiBook}. We put $I_q(h):= I_q(\tilde h)$ for general $h\in\fH^{\otimes q}$ where $\tilde h\in\fH^{\odot q}$ is the canonical symmetrization of $h$, and we use the convention that $I_0\colon \RR\to\RR$ is the identity map. In particular, if $\fH=L^2(A)$ with a $\sigma$-finite non-atomic measure space $(A,\cA,\mu)$, then $I_q$ possesses an interpretation as a multiple stochastic integral of order $q$ with respect to the Gaussian random measure on $A$ with control measure $\mu$. We refer to Chapter 2.7 in \cite{NourdinPeccatiBook} for further details and explanations.

According to Theorem 2.2.4 in \cite{NourdinPeccatiBook}, every $F\in L^2(\O)$ admits a chaotic decomposition. In par\-ti\-cu\-lar, this means that
$$
F=\sum_{q=0}^\infty I_q(h_q)
$$
with $h_0=\EE[F]:=\int F\,\dint\PP$ and uniquely determined elements $h_q\in\fH^{\odot q}$, $q\geq 1$, that are called the kernels of the chaotic decomposition. We also mention that, for $q\geq 1$, $\EE[I_q(h_q)]=0$ and that
\begin{equation}\label{eq:Isometry}
\EE[I_p(h_p)I_q(h_q)]=\begin{cases}
p!\langle h_p,h_q\rangle_{\fH^{\otimes q}} &: p=q\\
0 &: \text{otherwise}
\end{cases}
\end{equation}
for $h_p\in\fH^{\odot p}$, $h_q\in\fH^{\odot q}$, $p,q\geq 1$, which implies that the variance of $F$ satisfies
\begin{equation}\label{eq:Variance}
\var(F):=\EE[F^2]-(\EE[F])^2=\sum_{q=1}^\infty q!\|h_q\|_{\fH^{\otimes q}}^2\,.
\end{equation}
More generally, the covariance of $F=\sum_{q=0}^\infty I_q(h_q)\in L^2(\O)$ and $G=\sum_{q=0}^\infty I_q(h_q')\in L^2(\O)$ is given by
\begin{equation}\label{eq:Covariance}
\cov(F,G) := \EE[FG]-\EE[F]\EE[G] = \sum_{q=1}^\infty q!\langle h_q,h_q'\rangle_{\fH^{\otimes q}}\,.
\end{equation}

Another crucial fact is that the product of two multiple stochastic integrals can be expressed as a linear combination of multiple stochastic integrals. More generally, let $p,q\geq 1$ be integers and $h\in\fH^{\odot p}$, $h'\in\fH^{\odot q}$. Then one has the multiplication formula
\begin{equation}\label{eq:MultiplicationFormula}
I_q(h)I_p(h') = \sum_{r=0}^{\min(p,q)}r!{q\choose r}{p\choose r}I_{p+q-2r}(h\ts_r h')\,,
\end{equation}
where $h\ts_r h':=\widetilde{h\s_r h'}$ stands for the canonical symmetrization of the contraction $h\s_r h'\in\fH^{\otimes p+q-2r}$. Note that for $h = h_1 \otimes \cdots \otimes h_p\in\fH^{\otimes p}$ and $h'= h'_1\otimes \cdots \otimes h'_q\in\fH^{\otimes q}$ the contraction can be defined as
\be{eq:contraction}
h\s_r h':= \langle h_1, h'_1\rangle_{\fH} \cdots \langle h_r, h'_r\rangle_{\fH}\,[h_{r+1}\otimes \cdots \otimes h_{p}\otimes h'_{r+1} \otimes\cdots\otimes  h'_{q}]\,.
\ee
By linearity, the contraction operation can be extended to any $h\in\fH^{\otimes p}$ and $h'\in\fH^{\otimes q}$.
In the case that $\fH=L^2(A)$ with a $\sigma$-finite non-atomic measure space $(A,\cA,\mu)$, we have that $\fH^{\otimes q}= L^2(A^q) := L^2(A^q, \cA^{\otimes q}, \mu^{\otimes q})$ and that  $\fH^{\odot q}$ coincides with the space $L^2_{\rm sym}(A^q)$ of $\mu^{\otimes q}$-almost everywhere symmetric functions on $A^q$. Moreover,
\begin{align*}
(f\s_r g) (y_1,\ldots,y_{p+q-2r}) := \int_{A^r} & f(x_1,\ldots,x_r,y_1,\ldots,y_{p-r})\\
& \times g(x_1,\ldots,x_r,y_{p-r+1},\ldots,y_{p+q-2r})\,\mu^{\otimes r}(\dint(x_1,\ldots,x_r))
\end{align*}
with $f\in L^2_{\rm sym}(A^p)$, $g\in L^2_{\rm sym}(A^q)$ and $y_1,\ldots,y_{p+q-2r}\in A$.

\paragraph{Convention.} Through our paper, we will adopt the following convention that whenever the Hilbert space $\fH$ coincides with an $L^2(A)$-space we write $f$ instead of $h$ for an element of $L^2(A)$ to underline that we are dealing with functions. Furthermore, we use the shorthand notation $\|\,\cdot\,\|_q$ for $\|\,\cdot\,\|_{\fH^{\otimes q}}$ for all integers $q\geq 1$.

\subsection{Malliavin operators}

In this section, we recall the definition of the four basic operators from Malliavin calculus and summarize those properties which are needed later. For that purpose and to simplify our presentation we assume from now on that $\fH=L^2(A)$ with a $\sigma$-finite non-atomic measure space $(A,\cA,\mu)$. Note that because of isomorphy of Hilbert spaces, this is no restriction of generality. For further details we direct the reader to the monographs \cite{Janson,NourdinPeccatiBook,Nualart}.

\paragraph{Malliavin derivative.} Suppose that $F\in L^2(\O)$ has a chaos decomposition
\begin{equation}\label{eqn:chaotic}
F=\sum_{q=0}^\infty I_q(f_q), \qquad f_q\in L^2_{\rm sym}(A^q)\,,
\end{equation}
and suppose that $\sum\limits_{q=1}^\infty q\,q!\|f_q\|_q^2<\infty$. In this case we say that $F$ belongs to the domain of $D$, formally we indicate this by writing $F\in{\rm dom}(D)$. For $F\in{\rm dom}(D)$ and $x\in A$ we define the Malliavin derivative of $F$ in direction $x$ as
\begin{equation}\label{eq:DefD}
D_xF := \sum_{q=1}^\infty qI_{q-1}(f_q(x,\,\cdot\,))\,,
\end{equation}
where $f_q(x,\,\cdot\,)\in L_{\rm sym}^2(A^{q-1})$ stands for the function $f_q$ with one of its variables fixed to be $x$ (which one is irrelevant, since the functions $f_q$ are symmetric). 

We further define for all integers $k\geq 1$ the iterated Malliavin derivative $D^kF$ as 
$$
D_{x_1,\ldots,x_k}^kF:=\sum_{q=k}^\infty q(q-1)\cdots(q-k+1)\,I_{q-k}(f_q(x_1,\ldots,x_k,\,\cdot\,))\,,\qquad x_1,\ldots,x_k\in A\,,
$$
whenever $F\in{\rm dom}(D^k)$, that is, if $F=\sum_{q=0}^\infty I_q(f_q)$ satisfies $\sum_{q=k}^\infty q(q-1)\cdots(q-k+1)\|f_q\|_q^2<\infty$.

Finally, we introduce the subspace $\DD^{1,4}$ of ${\rm dom}(D)$ containing all $F\in L^4(\O)$ such that 
\[
\EE\|DF\|_1^4 = \EE\,\Big| \int_A |D_xF|^2\, \mu(\dint x)\Big|^2<\infty\,,
\]
see Chapter 2.3 in \cite{NourdinPeccatiBook} for a formal construction. Moreover, we recall that the Malliavin derivative can be used to compute the kernels $f_q$ in the chaotic decomposition of a given functional $F$. Namely, assuming that $F\in{\rm dom}(D^q)$ for some $q\geq 1$, Stroock's formula \cite[Corollary 2.7.8]{NourdinPeccatiBook} says that
\be{eq:Stroock}
f_q = {1\over q!}\, \EE[D^qF]\,.
\ee

\paragraph{Divergence.} We write $L^2(A\times\O):= L^2(A\times \O, \cA \otimes \mathcal F, \mu \otimes \PP)$ for the space of square-integrable random processes $u=(u_x)_{x\in A}$ on $A$. Fix such a process $u\in L^2(A\times\O)$ and suppose that it satisfies
$$
\Big|\EE\int_A (D_xF)\,u_x\,\mu(\dint x)\Big|\leq c\,\EE[F^2]
$$
for all $F\in{\rm dom}(D)$ and some constant $c>0$ that is allowed to depend on $u$. We denote the class of such processes by ${\rm dom}(\delta)$ and define for $u\in{\rm dom}(\delta)$ the divergence $\delta(u)$ of $u$ by the duality relation
$$
\EE[F\delta(u)] = \EE\int_A (D_xF)\,u_x\,\mu(\dint x)\,,\qquad F\in{\rm dom}(D)\,.
$$
That is, $\delta$ is the operator which is adjoint to the Malliavin derivative $D$.

The divergence can also be defined in terms of chaotic decompositions. Suppose that $u\in \dom(\delta)$ such that $u_x\in L^2(A\times \O)$ for all $x\in A$. Then there are kernels $f_q\in L^2(A^{q+1})$, $q\ge0$, such that 
\[
u_x = \sum_{q=0}^\infty I_q(f_q(x,\,\cdot\,))\,, \qquad x\in A\,,
\]
and $f_q(x,\,\cdot\,)\in L_{\rm sym}^2(A^q)$. Moreover, $u\in\dom(\delta)$ if and only if $\sum\limits_{q=0}^\infty (q+1)!\|\tilde f_q\|^2<\infty$ and in this case $\delta(u)$ is given by
\[
\delta(u) = \sum_{q=0}^\infty I_{q+1} (\tilde f_q)\,,
\]
where 
\[\tilde f_q(x_1, \ldots, x_{q+1}) := \frac{1}{(q+1)!}\sum_{\pi}f(x_{\pi(1)},\ldots,x_{\pi(q+1)})
\]
denotes the canonical symmetrization of $f_q\in L^2(A^{q+1})$ with the sum running over all permutations $\pi$ of $\{1,\ldots,q+1\}$.

\paragraph{Ornstein-Uhlenbeck generator and its pseudo-inverse.} Let $F\in L^2(\O)$ be a square integrable functional with chaos decomposition as at \eqref{eqn:chaotic} and define
$$
LF := -\sum_{q=0}^\infty qI_q(f_q)\,,
$$
whenever the series converges in $L^2(\O)$. The domain ${\rm dom}(L)$ of $L$ is the set of those $F\in L^2(\O)$ for which $\sum\limits_{q=1}^\infty q^2\,q!\|f_q\|_q^2<\infty$. The operator $L$ is called the generator of the Ornstein-Uhlenbeck semigroup associated with the Gaussian random measure on $A$ having control measure $\mu$. By $L^{-1}$ we denote its pseudo-inverse acting on centred $F\in L^2(\O)$ as follows:
\begin{equation}\label{eq:defL-1}
L^{-1}F := -\sum_{q=1}^\infty {1\over q}I_q(f_q)\,.
\end{equation}
For non-centred $F\in L^2(\O)$ we put $L^{-1}F:=L^{-1}(F-\EE[F])$. Clearly, for centred $F\in{\rm dom}(L)$ one has that $LL^{-1}F=L^{-1}LF=F$. Moreover, the operators $D$, $\delta$ and $L$ are related by
$$
\delta(DF) = -LF\,,\qquad F\in {\rm dom}(L)\,.
$$
In fact, according to \cite[Proposition 1.4.8]{Nualart}, $F\in{\rm dom}(L)$ is equivalent to $F\in{\rm dom}(D)$ and $DF\in{\rm dom}(\delta)$.

\section{A quantitative central limit theorem}\label{sec:MainResults}

Let $W$ be an isonormal Gaussian process defined on a probability space $(\O,\cF,\PP)$ and over a Hilbert space $\fH$ as in the previous section. Further, let $F\in L^2(\O)$. Then, as we have seen above, $F$ admits the chaos decomposition
\begin{equation}\label{eq:FChaos}
F = \sum_{q=0}^\infty I_q(h_q)
\end{equation}
with $h_0=\EE[F]$ and kernels $h_q\in\fH^{\odot q}$, $q\geq 1$.

Our aim is to measure the distance between $F$ and a centred Gaussian random variable with the same variance as $F$. We do this in terms of different probability metrics. To define them, recall that a collection $\Phi$ of measurable functions $\varphi:\RR\to\RR$ is said to be separating if for any two random variables $Y$ and $Y'$, $\EE[\varphi(Y)]=\EE[\varphi(Y')]$ for all $\varphi\in\Phi$ with $\EE[\varphi(Y)],\EE[\varphi(Y')]<\infty$ implies that $Y$ and $Y'$ are identically distributed. For such a class of functions $\Phi$ we define the probability metric $d_{\Phi}$ by putting
$$
d_{\Phi}(Y,Y'):=\sup_{\varphi\in\Phi}\big|\EE[\varphi(Y)]-\EE[\varphi(Y')]\big|\,,
$$
where $Y$ and $Y'$ are random variables satisfying $\EE[\varphi(Y)],\EE[\varphi(Y')]<\infty$ for all $\varphi\in\Phi$. Examples for such probability metrics are
\begin{itemize}
\item the total variation distance $d_{TV}:=d_{\Phi_{TV}}$, where $\Phi_{TV}=\{{\bf 1}_B:B\subset\RR\text{ a Borel set}\}$,
\item the Kolmogorov distance $d_K:=d_{\Phi_K}$, where $\Phi_K=\{{\bf 1}_{(-\infty,x]}:x\in\RR\}$,
\item the Wasserstein distance $d_W:=d_{\Phi_W}$, where $\Phi_{W}$ is the class of Lipschitz functions $\varphi:\RR\to\RR$ with $\|\varphi\|_{Lip}\leq 1$, where $\|\varphi\|_{Lip}:=\sup\{|\varphi(x)-\varphi(y)|/|x-y|:x,y\in\RR,x\neq y\}$,
\item the bounded Wasserstein distance $d_{bW}:=d_{\Phi_{bW}}$, in which case $\Phi_{bW}$ is the class of functions $\varphi:\RR\to\RR$ with $\|\varphi\|_{Lip}+\|h\|_\infty\leq 1$, where $\|\varphi\|_\infty:=\sup\{|\varphi(x)|:x\in\RR\}$.
\end{itemize}

If $F$ is as above and such that $\EE[F]=0$, and $Z\sim\cN(0,\sigma^2)$ with $\sigma^2:=\EE[F^2]$ denotes a Gaussian random variable, the main result of the seminal paper \cite{NourdinPeccati09} (see also Chapter 5 in \cite{NourdinPeccatiBook}) provides an upper bound for $d_{\Phi}(F,Z)$ by combining Stein's method for normal approximation with the Malliavin formalism as introduced in Section \ref{sec:GaussianAnalysis}. Here, $d_\Phi\in\{d_{TV},d_K,d_W,d_{bW}\}$ is one of the four probability distances introduced above. We use this bound to provide an estimate for $d_{\Phi}(F,Z)$ in terms of the kernels $h_q$ appearing in the chaotic representation \eqref{eq:FChaos} of $F$.

\begin{theorem}\label{thm:Main}
Let $F\in L^2(\O)$ be centred and such that $\EE[F^2]=\sigma^2>0$ and $F\in\DD^{1,4}$. Let $Z\sim\cN(0,\sigma^2)$ be a centred Gaussian random variable with variance $\sigma^2$ and denote by $h_q\in\fH^{\odot q}$, $q\geq 0$, the kernels in the chaotic decomposition \eqref{eq:FChaos} of $F$. Then,
\begin{equation}\label{eq:MainEstimate}
\begin{split}
d_{bW}(F,Z) &\leq d_W(F,Z) \leq \frac{c}{\sigma}\sum_{p=1}^\infty p\sum_{r=1}^{p-1} (r-1)!{p-1\choose r-1}^2\sqrt{(2(p-r))!}\,\|h_p\s_r h_p\|_{\fH^{\otimes 2(p-r)}}\\
&+\frac{c}{\sigma}\sum_{p,q=1\atop p\neq q}^\infty p\sum_{r=1}^{\min(p,q)} (r-1)!{p-1\choose r-1}{q-1\choose r-1}\sqrt{(p+q-2r)!}\,\|h_p\s_r h_q\|_{\fH^{\otimes p+q-2r}}
\end{split}
\end{equation}
with $c=\sqrt{2/\pi}$. In addition, if $F$ has a density with respect to the Lebesgue measure on $\RR$, then the same bound also holds with $c=2/\sigma$ in case of the total variation distance and $c=1/\sigma$ for the Kolmogorov distance.
\end{theorem}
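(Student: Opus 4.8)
The plan is to start from the Malliavin--Stein bound of \cite{NourdinPeccati09} (see also Chapter~5 in \cite{NourdinPeccatiBook}) and then to make the Stein factor fully explicit in terms of the kernels. Write $G:=\langle DF,-DL^{-1}F\rangle_{\fH}$. For centred $F$ with $\EE[F^2]=\sigma^2$ and $Z\sim\cN(0,\sigma^2)$ these results provide
\begin{equation*}
d_W(F,Z)\le \frac{\sqrt{2/\pi}}{\sigma}\sqrt{\EE[(\sigma^2-G)^2]}\,,\qquad d_{TV}(F,Z)\le\frac{2}{\sigma^2}\sqrt{\EE[(\sigma^2-G)^2]}\,,\qquad d_K(F,Z)\le\frac{1}{\sigma^2}\sqrt{\EE[(\sigma^2-G)^2]}\,,
\end{equation*}
where the last two estimates require the law of $F$ to be absolutely continuous, which is exactly where the density hypothesis enters. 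The inequality $d_{bW}(F,Z)\le d_W(F,Z)$ is immediate from $\Phi_{bW}\subseteq\Phi_W$. Everything thus reduces to estimating $\sqrt{\EE[(\sigma^2-G)^2]}$, and the membership $F\in\DD^{1,4}$ guarantees $G\in L^2(\O)$ (by Cauchy--Schwarz in $\fH$ together with the $L^4$-boundedness of $-DL^{-1}$), so that all manipulations below take place in $L^2(\O)$.

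The core computation is the chaotic expansion of $G$. From \eqref{eq:DefD} one has $D_xF=\sum_{p\ge1}pI_{p-1}(h_p(x,\cdot))$, while \eqref{eq:defL-1} gives $-D_xL^{-1}F=\sum_{q\ge1}I_{q-1}(h_q(x,\cdot))$. Inserting these into $G=\int_A D_xF\,(-D_xL^{-1}F)\,\mu(\dint x)$ and expanding each product $I_{p-1}(h_p(x,\cdot))I_{q-1}(h_q(x,\cdot))$ by the multiplication formula \eqref{eq:MultiplicationFormula} produces multiple integrals $I_{p+q-2-2s}$ of the contracted kernels $h_p(x,\cdot)\ts_s h_q(x,\cdot)$. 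Integrating the outer variable $x$ merges with the $s$-fold contraction via the identity $\int_A h_p(x,\cdot)\s_s h_q(x,\cdot)\,\mu(\dint x)=h_p\s_{s+1}h_q$; after setting $r=s+1$ this yields
\begin{equation*}
G=\sum_{p,q=1}^\infty p\sum_{r=1}^{\min(p,q)}(r-1)!\binom{p-1}{r-1}\binom{q-1}{r-1}\,I_{p+q-2r}(h_p\s_r h_q)\,.
\end{equation*}

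Next I isolate the constant chaos. A summand has order $p+q-2r=0$ if and only if $p=q=r$, and then its coefficient times $I_0(h_p\s_p h_p)=\|h_p\|_p^2$ equals $p!\,\|h_p\|_p^2$; summing over $p$ recovers $\sigma^2=\sum_{p\ge1}p!\,\|h_p\|_p^2$ by \eqref{eq:Variance}. Hence these terms cancel $\sigma^2$ exactly, and $\sigma^2-G$ is the negative of the remaining multiple integrals, which split into a diagonal part ($p=q$, $1\le r\le p-1$) and an off-diagonal part ($p\ne q$, $1\le r\le\min(p,q)$), the latter automatically satisfying $p+q-2r\ge|p-q|\ge1$. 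Applying the triangle inequality in $L^2(\O)$ together with the isometry \eqref{eq:Isometry} in the form $\|I_n(g)\|_{L^2(\O)}=\sqrt{n!}\,\|\tilde g\|_n\le\sqrt{n!}\,\|g\|_n$ (the last step since symmetrization is a contraction) to each summand reproduces precisely the two sums on the right-hand side of \eqref{eq:MainEstimate}, and multiplying by the prefactor $c/\sigma$ with $c=\sqrt{2/\pi}$, $c=2/\sigma$, or $c=1/\sigma$ settles the four distances.

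The main obstacle is analytic rather than algebraic: one must justify that the chaotic series for $D_xF$ and $-D_xL^{-1}F$ may be multiplied termwise, integrated in $x$, and rearranged, with every interchange of summation and $\mu$-integration legitimate in $L^2(\O)$. This is where the identity $\int_A h_p(x,\cdot)\s_s h_q(x,\cdot)\,\mu(\dint x)=h_p\s_{s+1}h_q$ and a Fubini/dominated-convergence argument must be executed carefully; the hypothesis $F\in\DD^{1,4}$, which secures $G\in L^2(\O)$ and hence the convergence of the chaotic decomposition of $\sigma^2-G$, is exactly what renders the termwise triangle inequality admissible. A secondary bookkeeping point is to track the combinatorial factors so that the binomials $\binom{p-1}{r-1},\binom{q-1}{r-1}$, the leading factor $p$ coming from the Malliavin derivative, and the $(r-1)!$ arising from the shifted contraction index align exactly with the statement.
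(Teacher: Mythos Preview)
Your proposal is correct and follows essentially the same approach as the paper: both start from the Malliavin--Stein bound $d_\Phi(F,Z)\le c_\Phi\sqrt{\EE[(\sigma^2-G)^2]}$ with $G=\langle DF,-DL^{-1}F\rangle_\fH$, expand $G$ via the multiplication formula, and then pass to contraction norms using the isometry and $\|\tilde g\|\le\|g\|$. The only organisational difference is that you first write down the full chaos expansion of $G$, identify its zeroth chaos as $\sigma^2$, and then apply the $L^2$-triangle inequality termwise, whereas the paper first uses $\sqrt{\var(\sum X_{p,q})}\le\sum\sqrt{\var(X_{p,q})}$ to split into $(p,q)$-blocks and then computes each variance as $T_1(F)-T_2(F)^2$; the two routes are equivalent and yield exactly the same final bound.
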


\begin{remark}\rm 
The combination of Stein's method with techniques from Malliavin calculus has also been applied to functionals of Poisson random measures. In this context, a limit theorem that has the same spirit as our Theorem \ref{thm:Main} was derived in \cite{HugLastSchulte} and, in fact, it was this paper that inspired us to consider a similar question for Gaussian functionals. Also our proof follows the principal idea developed in \cite{HugLastSchulte}. However, since such functionals are much easier from a combinatorial point of view, Theorem \ref{thm:Main} has a much more neat form compared to its Poissonian analogue in \cite{HugLastSchulte}.  
\end{remark}

\begin{remark}\label{rm:inequality}\rm
By applying the Cauchy-Schwarz inequality, we have for $h_p\in \fH^{\odot p}, h_q\in\fH^{\odot q}$, $p,q\ge1$, the estimate
\begin{align} \label{eq:CS}
\|h_p\s_r h_q\|_{\fH^{\otimes p+q-2r}} 
&\le \sqrt{\|h_p \s_{p-r} h_p\|_{\fH^{\otimes 2r}}\|h_q \s_{q-r} h_q\|_{\fH^{\otimes 2r}}} \\
&\le \frac{1}{2} \big(\|h_p \s_{p-r} h_p\|_{\fH^{\otimes 2r}} + \|h_q \s_{q-r} h_q\|_{\fH^{\otimes 2r}} \big)\\
&= \frac{1}{2} \big(\|h_p \s_{r} h_p\|_{\fH^{\otimes 2(p-r)}} + \|h_q \s_{r} h_q\|_{\fH^{\otimes 2(q-r)}} \big)\,, \nonumber
\end{align}
see also \cite[Equation (6.2.4)]{NourdinPeccatiBook}. Hence, the right hand side of \eqref{eq:MainEstimate} can in principle be expressed solely in terms of the contraction norms $\|h_p\s_r h_p\|_{\fH^{\otimes 2(p-r)}}$, $p\ge1$, $r=1, \ldots, p$. However, in the course of such an approach, the term $\|h_p \s_{p} h_p\|_{\fH^{\otimes 0}} = \|h_p\|^2_{\fH^{\otimes p}}$, $p\ge1$, shows up, which in turn is not present in \eqref{eq:MainEstimate}. This term stems from the contraction norm $\|h_p\s_p h_q\|_{\fH^{\otimes q-p}}$, $p<q$, and it is precisely this term that forces us to deal with the chaotic gap arising in the context of Theorem \ref{thm:quantitative CLT} below.
\end{remark}

\begin{proof}[Proof of Theorem \ref{thm:Main}]
To simplify our presentation it is no loss of generality to assume that $\fH=L^2(A)$ for some $\sigma$-finite non-atomic measure space $(A,\cA,\mu)$. In this case, we shall write $\|\,\cdot\,\|_q$ instead of $\|\,\cdot\,\|_{\fH^{\otimes q}}$ for integers $q\geq 1$. Moreover, in order to underline that the elements of the Hilbert space we are dealing with are functions, we use the symbols $f$ and $g$ instead of $h$ and $h'$ and denote the kernels of the chaotic decomposition of $F$ by $f_q\in L^2_{\rm sym}(A^q)$, $q\geq 0$, building thereby on the notation aleady introduced in the previous section.

We prove the result only for the unit variance case $\sigma^2=1$, the general result then follows by a scaling argument exactly as in the proof of Theorem 5.1.3 in \cite{NourdinPeccatiBook}. In this set-up, the same result provides an upper bound for $d_{W}(F,Z)$, $d_{bW}(F,Z)$, $d_{K}(F,Z)$ and $d_{TV}(F,Z)$ in terms of the Malliavin operators $D$ and $L^{-1}$. 
Formally, due to the fact that $F\in\DD^{1,4}$ implies that $\int_A (D_xF)(-D_xL^{-1}F)\,\mu(\dint x) \in L^2(\Omega)$ as shown in Proposition 5.1.1 in \cite{NourdinPeccatiBook}, one has that
\be{eq:estimate}
d_{\Phi}(F,Z) \leq  c_\Phi\,\sqrt{\EE\Big[\Big(1-\int_A (D_xF)(-D_xL^{-1}F)\,\mu(\dint x\Big)^2\Big]}
\ee
with
\begin{equation}\label{eq:ConstantcH}
c_\Phi=\begin{cases}
\sqrt{2\over\pi} &: \Phi=\Phi_{W}\text{\ or \ }\Phi=\Phi_{bW}\\
1 &: \Phi=\Phi_K\\
2 &: \Phi=\Phi_{TV}\,,
\end{cases}
\end{equation}
where we implicitly used the assumption that $F$ has a density in case of the Kolmogorov and the total variation distance, see \cite{NourdinPeccatiBook}. 
Let us abbreviate the term under the above square-root by $T(F)$. Using the variance representation \eqref{eq:Variance} and the definitions \eqref{eq:DefD} and \eqref{eq:defL-1} of $D$ and $L^{-1}$, respectively, $T(F)$ can be re-written as
$$
T(F)=\EE\bigg[\Big(\int_A \sum_{p=1}^\infty pI_{p-1}(f_p(x,\,\cdot\,))\,\sum_{q=1}^\infty I_{q-1}(f_q(x,\,\cdot\,))\,\mu(\dint x)-\sum_{n=1}^\infty n!\|f_n\|_n^2\Big)^2\bigg]\,.
$$
Thus, applying the inequality $\sqrt{\var(X+Y)}\le \sqrt{\var(X)} + \sqrt{\var(Y)}$ yields that $\sqrt{T(F)}$ can be estimated from above by
\begin{align*}
&\sum_{p,q=1}^\infty p\,\Bigg(\EE\bigg[\Big(\int_A I_{p-1}(f_p(x,\,\cdot\,))I_{q-1}(f_q(x,\,\cdot\,))\,\mu(\dint x)\\
&\qquad\qquad\qquad\qquad-\EE\int_AI_{p-1}(f_p(x,\,\cdot\,))I_{q-1}(f_q(x,\,\cdot\,))\,\mu(\dint x)\Big)^2\bigg]\Bigg)^{1/2}\\
& = \sum_{p,q=1}^\infty p\,\Bigg(\Var\Big(\int_AI_{p-1}(f_p(x,\,\cdot\,))I_{q-1}(f_q(x,\,\cdot\,))\,\mu(\dint x)\Big)\Bigg)^{1/2}\,,
\end{align*}
where we used the It\^o isometry \eqref{eq:Isometry} to get an alternative expression for the term $\sum\limits_{n=1}^\infty n!\|f_n\|_n^2 $. Next, we compute the variance, using that
$$
\Var\Big(\int_AI_{p-1}(f_p(x,\,\cdot\,))I_{q-1}(f_q(x,\,\cdot\,))\,\mu(\dint x)\Big) = T_1(F)-T_2(F)^2
$$
with $T_1(F)$ and $T_2(F)$ given by
$$
T_1(F) := \int_A\int_A\EE\big[I_{p-1}(f_p(x,\,\cdot\,))I_{q-1}(f_q(x,\,\cdot\,))I_{p-1}(f_p(y,\,\cdot\,))I_{q-1}(f_q(y,\,\cdot\,))\big]\,\mu(\dint y)\mu(\dint x)
$$
and
$$
T_2(F) := \EE\int_A I_{p-1}(f_p(x,\,\cdot\,))I_{q-1}(f_q(x,\,\cdot\,))\,\mu(\dint x)\,.
$$
To compute $T_1(F)$ we use twice the multiplication formula \eqref{eq:MultiplicationFormula} together with the stochastic Fubini theorem \cite[Theorem 5.13.1]{PeccatiTaqquBook} and the isometry property \eqref{eq:Isometry}. We obtain that
\begin{align*}
T_1(F) &= \int_A\int_A\EE\Bigg[\sum_{r=0}^{\min(p-1,q-1)}\sum_{s=0}^{\min(p-1,q-1)}r!s!{p-1\choose r}{q-1\choose r}{p-1\choose s}{q-1\choose s}\\
&\qquad\times I_{p+q-2(r+1)}(f_p(x,\,\cdot\,)\s_r f_q(x,\,\cdot\,)) I_{p+q-2(s+1)}(f_p(y,\,\cdot\,)\s_s f_q(y,\,\cdot\,))\Bigg]\,\mu(\dint y)\mu(\dint x)\\
&= \sum_{r=0}^{\min(p-1,q-1)}\sum_{s=0}^{\min(p-1,q-1)}r!s!{p-1\choose r}{q-1\choose r}{p-1\choose s}{q-1\choose s}\\
&\qquad\qquad\qquad\times \EE\big[I_{p+q-2(r+1)}(f_p\s_{r+1} f_q) I_{p+q-2(s+1)}(f_p\s_{s+1} f_q)\big]\\
&=\sum_{r=1}^{\min(p,q)}((r-1)!)^2{p-1\choose r-1}^2{q-1\choose r-1}^2(p+q-2r)!\,\|f_p\ts_r f_q\|_{p+q-2r}^2\,.
\end{align*}
On the other hand, we have 
$$
T_2(F) = {\bf 1}(p=q)\,\int_A (p-1)!\|f_p(x,\,\cdot\,)\|_{p-1}^2\,\mu(\dint x) = {\bf 1}(p=q)\,(p-1)!\|f_p\|_p^2\,,
$$
which is just the square root of the last summand in the expression for $T_1(F)$ for $p=q$. Consequently, combining the expressions for $T_1(F)$ and $T_2(F)$ yields
\begin{align*}
& T_1(F)-T_2(F)^2\\
&= {\bf 1}(p=q)\sum_{r=1}^{p-1}((r-1)!)^2{p-1\choose r-1}^4(2(p-r))!\|f_p\ts_r f_p\|_{2(p-r)}^2\\
&\qquad +{\bf 1}(p\neq q)\sum_{r=1}^{\min(p,q)}((r-1)!)^2{p-1\choose r-1}^2{q-1\choose r-1}^2(p+q-2r)!\,\|f_p\ts_r f_q\|_{p+q-2r}^2\,.
\end{align*}
Together with the elementary inequality $\sqrt{a+b}\leq\sqrt{a}+\sqrt{b}$, valid for all $a,b\geq 0$, and the fact that, by Jensen's inequality, $\|\widetilde{g}\|_p\leq\|g\|_p$ for all $g\in L^2(A^p)$, $p\geq 1$, this implies that
\begin{align*}
d_{\Phi}(F,Z) &\leq c_\Phi\,\sum_{p=1}^\infty p\sum_{r=1}^{p-1} (r-1)!{p-1\choose r-1}^2\sqrt{(2(p-r))!}\,\|f_p\ts_r f_p\|_{2(p-r)}\\
&\qquad +c_\Phi\,\sum_{p,q=1\atop p\neq q}^\infty p\sum_{r=1}^{\min(p,q)}  (r-1)!{p-1\choose r-1}{q-1\choose r-1}\sqrt{(p+q-2r)!}\,\|f_p\ts_r f_q\|_{p+q-2r}\\
&\leq c_\Phi\,\sum_{p=1}^\infty p\sum_{r=1}^{p-1} (r-1)!{p-1\choose r-1}^2\sqrt{(2(p-r))!}\,\|f_p\s_r f_p\|_{2(p-r)}\\
&\qquad +c_\Phi\,\sum_{p,q=1\atop p\neq q}^\infty p\sum_{r=1}^{\min(p,q)}  (r-1)!{p-1\choose r-1}{q-1\choose r-1}\sqrt{(p+q-2r)!}\,\|f_p\s_r f_q\|_{p+q-2r}\,.
\end{align*}
The proof is thus complete.
\end{proof}

\begin{remark}\rm 
It appears that the formula for the scalar product $\int_A (D_xF)(-D_xL^{-1}F)\,\mu(\dint x)$ has already been computed in Equation (6.3.2) in \cite{NourdinPeccatiBook}. However, it has not been used in \cite{NourdinPeccatiBook} to derive a quantitative central limit theorem. 
\end{remark}

Note that the assumption $F\in\DD^{1,4}$ in Theorem \ref{thm:Main} justifies the inequality at \eqref{eq:estimate}, but does not necessarily imply that the sums on the right hand side of \eqref{eq:MainEstimate} converge. For this to hold, extra assumptions are needed. However, it turns out that they are not too restrictive in the applications we have in mind, see Theorem \ref{thm:quantitative CLT}.

Let us briefly consider two special cases, namely that $F$ belongs to a single Wiener chaos or to a finite sum of Wiener chaoses. Here, the result reduces to Proposition 3.2 or Proposition 3.7 in \cite{NourdinPeccati09}, respectively. Note that in these cases \cite[Theorem 2.10.1]{NourdinPeccatiBook} ensures that the functional $F$ has a density with respect to the Lebesgue measure on $\RR$. Moreover, one easily verifies that $F\in\DD^{1,4}$. For simplicity we decided to restrict to the unit variance case only, which is, as explained above, no restriction of generality.

\begin{corollary}
Let $Z$ be a centred Gaussian random variable with unit variance and let $d_\Phi$ be one of the probability metrics $d_{TV},d_{K},d_W$ or $d_{bW}$.
\begin{itemize}
\item[(a)] If $F=I_q(h)$ for some integer $q\ge2$ and an element $h\in\fH^{\odot q}$ such that $\EE[F^2]=1$. Then there is a constant $c_1\in(0,\infty)$ only depending on $q$ and the choice of the probability metric such that $$d_{\Phi}(F,Z)\leq c_1\,\max_{r=1,\ldots,q-1}\|h\s_r h\|_{\fH^{\otimes 2(q-r)}}\,.$$
\item[(b)] If $F=I_{q_1}(h_1)+\ldots+I_{q_n}(h_n)$ for integers $n,q_1,\ldots,q_n\geq 1$ and elements $h_i\in\fH^{\odot q_i}$, $i=1,\ldots,n$, such that $\EE[F^2]=1$. Then there are constants $c_1,c_2\in(0,\infty)$ only depending on $q_1,\ldots,q_n$ and on the choice of the probability metric such that $$d_{\Phi}(F,Z)\leq c_1\,\max_{r=1,\ldots,q_i-1\atop i=1,\ldots,n}\|h_i\s_r h_i\|+c_2\,\max_{r=1,\ldots,\min(q_i,q_j)\atop 1\leq i<j\leq n}\|h_i\s_r h_j\|\,.$$
\end{itemize} 
\end{corollary}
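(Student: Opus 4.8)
The plan is to deduce both statements directly from Theorem~\ref{thm:Main}, exploiting that only finitely many kernels in the chaotic decomposition of $F$ are non-zero, so that the two infinite sums in \eqref{eq:MainEstimate} collapse into finite sums whose coefficients depend only on the chaos orders involved. Before invoking the theorem I would record that its hypotheses are met: as noted in the paragraph preceding the corollary, \cite[Theorem 2.10.1]{NourdinPeccatiBook} guarantees that $F$ possesses a density with respect to Lebesgue measure (covering the total-variation and Kolmogorov cases), and $F\in\DD^{1,4}$ is readily verified since $F$ lies in a finite sum of Wiener chaoses. Throughout, $\sigma^2=\EE[F^2]=1$, so the prefactors $c/\sigma$ in \eqref{eq:MainEstimate} are absolute constants depending only on the chosen metric.

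For part (a), the only non-zero kernel of $F=I_q(h)$ is the one of order $q$, namely $h$. Consequently, in \eqref{eq:MainEstimate} the off-diagonal double sum -- which is supported on pairs of kernels of two distinct orders -- contributes nothing, while the diagonal sum reduces to its single term of order $q$, leaving
\[
d_\Phi(F,Z)\le c_\Phi\, q\sum_{r=1}^{q-1}(r-1)!\binom{q-1}{r-1}^2\sqrt{(2(q-r))!}\,\|h\s_r h\|_{\fH^{\otimes 2(q-r)}}.
\]
Each coefficient here is a fixed number depending only on $q$ and $r$, and there are exactly $q-1$ summands; bounding every contraction norm by $\max_{r=1,\ldots,q-1}\|h\s_r h\|_{\fH^{\otimes 2(q-r)}}$ and absorbing $c_\Phi$, the binomial factors, and the number of terms into a single constant $c_1=c_1(q,\Phi)$ yields the claim.

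For part (b) I would first reduce, without loss of generality, to the case in which $q_1,\ldots,q_n$ are pairwise distinct: summands of equal order may be merged via $I_m(h_i)+I_m(h_j)=I_m(h_i+h_j)$, and expanding $\|(h_i+h_j)\s_r(h_i+h_j)\|$ by the triangle inequality produces only contraction norms of the forms $\|h_i\s_r h_i\|$, $\|h_j\s_r h_j\|$ and $\|h_i\s_r h_j\|$, all controlled by the two maxima on the right-hand side (at the cost of enlarging the constants). With the orders distinct, the non-zero kernels of $F$ are precisely $h_1,\ldots,h_n$ sitting in chaoses of orders $q_1,\ldots,q_n$, so the diagonal sum in \eqref{eq:MainEstimate} runs over $p\in\{q_1,\ldots,q_n\}$ and the off-diagonal sum over ordered pairs of distinct indices. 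Using $\|h_i\s_r h_j\|=\|h_j\s_r h_i\|$ to merge the two orderings of each pair (the asymmetric prefactor $p$ then being replaced by the fixed quantity $q_i+q_j$) and bounding the finitely many fixed coefficients, one arrives at
\[
d_\Phi(F,Z)\le c_1\max_{r=1,\ldots,q_i-1\atop i=1,\ldots,n}\|h_i\s_r h_i\| + c_2\max_{r=1,\ldots,\min(q_i,q_j)\atop 1\le i<j\le n}\|h_i\s_r h_j\|,
\]
with $c_1,c_2$ depending only on $q_1,\ldots,q_n$ and $\Phi$. Note that the off-diagonal maximum runs over $r$ up to $\min(q_i,q_j)$, so it legitimately includes the fully contracted term $\|h_i\s_{\min(q_i,q_j)}h_j\|$ flagged in Remark~\ref{rm:inequality}; in this finite-sum setting it is harmless, being just one of finitely many quantities absorbed into $c_2$.

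I expect no genuine difficulty here, as the result is essentially a specialization of Theorem~\ref{thm:Main}. The only points demanding care are bookkeeping ones -- correctly identifying which kernels survive, accounting for the asymmetric factor $p$ when collapsing the two orderings of an off-diagonal pair, and the reduction to distinct orders in part (b) -- together with the preliminary verification that $F$ meets the density and $\DD^{1,4}$ hypotheses of the theorem.
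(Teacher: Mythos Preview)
Your proposal is correct and follows exactly the approach the paper intends: the corollary is stated without proof, the surrounding text making clear it is an immediate specialization of Theorem~\ref{thm:Main} (with the density and $\DD^{1,4}$ hypotheses handled by the remark preceding the statement). Your write-up in fact supplies more detail than the paper does, including the reduction to distinct orders in part (b) and the bookkeeping for the asymmetric factor $p$, none of which the paper spells out.
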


\section{Application to Gaussian processes}\label{sec:Application}

\subsection{The Breuer-Major Theorem}
Let $(a_n)_{n\in\NN}$, $(b_n)_{n\in\NN}$ be two positive sequences. Then we write $a_n\lesssim b_n$ if $a_n/b_n$ is bounded, and $a_n \sim b_n$ whenever $a_n\lesssim b_n$ and $b_n \lesssim a_n$. 

Consider a one-dimensional centred and stationary Gaussian process $X = (X_k)_{k\in\ZZ}$ with unit variance and a covariance function
\be{eq:covariance}
\r(j) = \EE[X_1X_{1+j}]\,,\qquad j\in\ZZ\,.
\ee
In what follows we will assume that
\begin{equation}\label{eq:AssumtionsGP}
|\r(j)| \sim |j|^{-\a}
\end{equation}
for some $\a>0$. Recall that the Cauchy-Schwarz implies that $|\r(j)|\le \r(0)=1$ for all $j\in\ZZ$. For technical reasons, we assume that for any $n\ge1$ the vector $(X_1, \ldots, X_n)$ is jointly Gaussian with a non-degenerate covariance matrix. As an example, one can think of $X$ being obtained from the increments of a fractional Brownian motion $B^H=(B_t^H)_{t\in\RR}$ with Hurst parameter $H\in(0,1)$, that is, $X_k=B_{k+1}^H-B_k^H$ for all $k\in\ZZ$. In that case
$$
\r(j)={1\over 2}\big(|j+1|^{2H}+|j-1|^{2H}-2|j|^{2H}\big)
$$
and thus \eqref{eq:AssumtionsGP} is satisfied with $\a=2-2H$, see Chapter 7.4 in \cite{NourdinPeccatiBook}.

Let $g\colon \RR\to\RR$ be a non-constant measurable function with $\EE|g(X_1)|^2<\infty$ and consider the partial sum
\be{eq:F_n}
F_n = \frac{1}{\sqrt{n}}\sum_{k=1}^n \{ g(X_k) - \EE[g(X_k)]\}\,,\qquad n\in\NN\,.
\ee
We will assume that $g$ has {Hermite rank} equal to $m\in\NN$. That is, for each polynomial $p\colon\RR\to\RR$ with degree $\in\{1,\ldots,m-1\}$ one has that $\EE[(g(X_1) - \EE[g(X_1)])p(X_1)]=0$ and $\EE[g(X_1)H_m(X_1)]\neq0$, where $H_m$ is the $m$th Hermite polynomial. The Hermite rank can also be described in a different manner. Namely, due to the moment condition $\EE|g(X_1)|^2<\infty$, $g$ has the following Hermite expansion:
\be{eq:Hermite expansion0}
g(x) - \EE[g(X_1)] = \sum_{q=1}^\infty c_qH_q(x)\qquad\text{with}\qquad c_q = \frac{1}{q!} \EE[g(X_1) H_q(X_1)]\,.
\ee
For $g$ to have Hermite rank $m$ means that $c_q=0$ for all $q\in\{1,\ldots,m-1\}$ and that $c_m\neq0$. (The existence of such an $m$ is implied by the fact that $g$ is non-constant.)

It turns out that the rates in Theorem \ref{thm:quantitative CLT} below do not only depend on $\a$ and the Hermite rank $m$, but also on a quantity $\gamma\in\NN\cup\{\infty\}$ that we call the \emph{chaotic gap} of $g$. Roughly speaking, it is the minimal distance between two active chaoses a functional lives in. More precisely, if $c_q=0$ for all $q\neq m$, we set $\gamma=\infty$. If there is a $p\ge1$ such that $c_p\neq0$ and $c_{p+1}\neq 0$, we set $\g=1$. Otherwise, $\gamma\ge2$ and it is uniquely characterized by the following two conditions:
\begin{itemize}
\item[(i)] for all $q\geq 1$: if $c_q\neq 0$ then $c_{q+1} = \cdots = c_{q+\g-1}=0$,
\item[(ii)] there exists $p\geq 1$ such that $c_p\neq 0$ and $c_{p+\g}\neq 0$.
\end{itemize}
Of course, if $g= H_q$ for some $q\ge1$, then $m=q$ and $\gamma=\infty$. Also if $g$ is a linear combination of Hermite polynomials, one can directly determine the rank and the chaotic gap. More general examples involve even and odd functions, both having a chaotic gap of $\gamma=2$, and the exponential function having a chaotic gap of $\gamma=1$. Another interesting example is the indicator function $g=\one_{[0,\infty)}$, which satisfies $m=1$ and $\gamma=2$.

Using the parameters $\a,\g$ and $m$ we can now formulate our quantitative central limit theorem for the random variables $F_n$ defined at \eqref{eq:F_n}.

\begin{theorem}\label{thm:quantitative CLT}
Let $g\colon\RR\to\RR$ be a measurable and non-constant function. Suppose that $\EE|g(X_1)|^2<\infty$, $\EE|g((2+\varepsilon)X_1)|^2<\infty$ for some $\varepsilon>0$, that $g$ has Hermite rank equal to $m\ge2$, that $\r(0) = 1$ and that $|\r(j)| \sim  |j|^{-\a}$ for some $\a >1/m$. Further, denote by $\g\in\NN\cup\{\infty\}$ the chaotic gap of $g$ and let $Z$ be a standard Gaussian variable. Then the following assertions are true for each of the probability metrics $d_\Phi\in\{d_W,d_{bW}\}$ and also for $d_{TV}$ and $d_K$ in the case that $F_n$ has a density with respect to the Lebesgue measure on $\RR$.
\begin{enumerate}
\item
If $m=2$ and $\g=1$ it holds that
\be{eq:CLT for F_n a}
d_\Phi\left(\frac{F_n}{\sqrt{\Var(F_n)}}, Z\right) 
\lesssim
\frac{1}{\Var(F_n)}\times 
\begin{cases}
n^{-1/2} &: \a>1  \\ 
n^{-\a/2} &: \a\in\Big(\frac{2}{3}, 1\Big)\\ 
n^{1-2\a} &: \a\in\Big(\frac{1}{2},\frac{2}{3}\Big]\,.
\end{cases}
\ee
\item
If $m=2$ and $\g\ge2$ it holds that
\be{eq:CLT for F_n b}
d_\Phi\left(\frac{F_n}{\sqrt{\Var(F_n)}}, Z\right) 
\lesssim 
\frac{1}{\Var(F_n)}\times 
\begin{cases}
n^{-1/2} &: \a>\frac{3}{4}  \\ 
n^{1-2\a} &: \a\in\Big(\frac{1}{2},\frac{3}{4}\Big)\,.
\end{cases}
\ee
\item
If $m\ge3$ and $\g=1$ it holds that
\be{eq:CLT for F_n c}
d_\Phi\left(\frac{F_n}{\sqrt{\Var(F_n)}}, Z\right) 
\lesssim
\frac{1}{\Var(F_n)}\times 
\begin{cases}
n^{-1/2} &: \a>1  \\ 
n^{-\a/2} &: \a\in\Big(\frac{1}{m-\frac12}, 1\Big)\\ 
n^{1-m\a} &: \a\in\Big(\frac{1}{m},\frac{1}{m-\frac12}\Big]\,.
\end{cases}
\ee
\item
If $m\ge3$ and $\g\ge2$ it holds that
\be{eq:CLT for F_n c}
d_\Phi\left(\frac{F_n}{\sqrt{\Var(F_n)}}, Z\right) 
\lesssim
\frac{1}{\Var(F_n)}\times 
\begin{cases}
n^{-1/2} &: \a>\frac12  \\ 
n^{-\a} &: \a\in\Big(\frac{1}{m-1}, \frac12\Big)\\ 
n^{1-m\a} &: \a\in\Big(\frac{1}{m},\frac{1}{m-1}\Big)\,.
\end{cases}
\ee
\end{enumerate}
Moreover, as $n\to\infty$, one has that $$\Var(F_n) \to \sigma^2:= \Var(g(X_1)) + 2\sum_{k=1}^\infty \Cov[g(X_1), g(X_{1+k})]\in[0,\infty)\,.$$
\end{theorem}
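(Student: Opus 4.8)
The plan is to feed the chaotic decomposition of $F_n$ into the master bound of Theorem~\ref{thm:Main}, estimate each contraction norm $\|h_p\s_r h_q\|$ in terms of sums of powers of the covariance $\r(\cdot)$, and then track which terms dominate as $n\to\infty$ in each of the four regimes. First I would write out the kernels explicitly. Since $g(X_k)-\EE[g(X_k)] = \sum_{q\ge m} c_q H_q(X_k)$ with $c_q=0$ for $q<m$, and since each $H_q(X_k)$ is itself a multiple Wiener--It\^o integral $I_q(e_k^{\s q})$ where $e_k\in\fH$ is the unit vector representing $X_k$ under the isonormal process, the $q$th kernel of $F_n$ is
\begin{equation*}
h_q = \frac{c_q}{\sqrt n}\sum_{k=1}^n e_k^{\s q}\,,\qquad q\ge m\,.
\end{equation*}
With $\langle e_k,e_\ell\rangle_\fH=\r(k-\ell)$, the contraction $h_p\s_r h_q$ becomes a double sum over $k,\ell$ of $\r(k-\ell)^r\,e_k^{\s(p-r)}\s e_\ell^{\s(q-r)}$, so that its squared norm $\|h_p\s_r h_q\|_{p+q-2r}^2$ is a quadruple sum whose summand is a product of powers of $\r$ evaluated at the four lattice differences. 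The core estimate I would establish is therefore a bound of the form
\begin{equation*}
\|h_p\s_r h_q\|_{p+q-2r}^2 \;\lesssim\; \frac{1}{n}\sum_{|k|<n}|\r(k)|^r \cdot \Big(\text{absolutely summable factor in }p,q,r\Big)\,,
\end{equation*}
the point being that only the innermost exponent~$r$ (the ``genuinely contracted'' index) governs the rate, while the remaining exponents $p-r,q-r\ge 1$ produce sums that converge under $\a>1/m$. The growth rate of $\sum_{|k|<n}|\r(k)|^r\sim\sum_{|k|<n}|k|^{-\a r}$ then splits into the three standard cases $\a r>1$ (bounded), $\a r=1$ (logarithmic), $\a r<1$ (power growth $n^{1-\a r}$), and after dividing by $\sqrt n$ this reproduces exactly the exponents $n^{-1/2}$, $n^{-\a/2}$, $n^{1-m\a}$ etc.\ appearing in the statement.

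The role of the chaotic gap and the Hermite rank is to pin down the \emph{smallest} effective value of $r$ that can occur, which is what selects between the four cases. In the off-diagonal sum ($p\ne q$) the index $r$ ranges up to $\min(p,q)$, and when $p<q$ the extremal term $r=p$ yields the contraction $h_p\s_p h_q$ whose norm involves $\|h_p\|_p^2$ --- precisely the dangerous term flagged in Remark~\ref{rm:inequality}. This term is present only when two distinct chaoses $p<q$ are simultaneously active, i.e.\ when $\g<\infty$; its contraction exponent is $r=p\ge m$, but the gap forces $q\ge p+\g$, and I would show that the slowest-decaying such contribution corresponds to $r=m$ when $\g=1$ (consecutive chaoses $m,m+1$ active) versus a strictly larger effective exponent when $\g\ge2$. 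This is exactly why the thresholds differ: for $\g=1,m=2$ the binding rate comes from $r=1$-type cross terms giving $n^{1-2\a}$ down to $\a>1/2$, whereas for $\g\ge2$ the relevant cross exponent is larger, pushing the threshold to $\a>3/4$ (resp.\ the $m\ge3$ analogues). So after the uniform contraction bound, the combinatorial heart is a bookkeeping argument: for each pair $(p,q)$ with $c_pc_q\ne0$ and each admissible $r$, identify the asymptotic order of the corresponding summand in \eqref{eq:MainEstimate}, and take the maximum (slowest-decaying) order subject to the constraints imposed by $m$ and $\g$.

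Finally, the limiting variance statement is the easiest piece: from \eqref{eq:Variance} and the kernels above,
\begin{equation*}
\Var(F_n)=\frac1n\sum_{j,k=1}^n \Cov[g(X_j),g(X_k)] = \sum_{|k|<n}\Big(1-\tfrac{|k|}{n}\Big)\,\Cov[g(X_1),g(X_{1+k})]\,,
\end{equation*}
and since $\Cov[g(X_1),g(X_{1+k})]=\sum_{q\ge m}q!\,c_q^2\,\r(k)^q$ is absolutely summable under $\a m>1$ (because $|\r(k)|^q\lesssim|k|^{-\a q}\le|k|^{-\a m}$), dominated convergence gives $\Var(F_n)\to\sum_{k\in\ZZ}\Cov[g(X_1),g(X_{1+k})]=\sigma^2\in[0,\infty)$. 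I would verify nonnegativity and finiteness of $\sigma^2$ along the way; finiteness is immediate from summability, and the expression is a limit of variances hence $\ge0$.

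The main obstacle I anticipate is not any single estimate but controlling the \emph{infinite} sums over $p,q$ in \eqref{eq:MainEstimate} uniformly in $n$: the binomial-factorial weights $(r-1)!\binom{p-1}{r-1}\binom{q-1}{r-1}\sqrt{(p+q-2r)!}$ grow rapidly, so I must show that the series converges and that the supremum over $n$ of each tail is controlled. The hypothesis $\EE|g((2+\e)X_1)|^2<\infty$ is almost certainly there to furnish super-geometric decay of the Hermite coefficients $c_q$ --- concretely a bound like $|c_q|\sqrt{q!}\lesssim (2+\e)^{-q}$ --- which would dominate the combinatorial growth and make the double series summable. Securing this coefficient decay and combining it cleanly with the per-term contraction estimates, so that interchanging the summation order and taking $n\to\infty$ is justified, is where the real technical work lies; the identification of the exponents themselves is then a routine (if lengthy) case analysis.
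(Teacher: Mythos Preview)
Your overall architecture matches the paper's: write the kernels $h_q=c_q n^{-1/2}\sum_k e_k^{\otimes q}$, expand the contraction norms as four-fold sums of products of powers of $\r$, reduce all $(p,q,r)$ to the worst cases governed by $m$ and $\g$, and control the infinite $p,q$-sums via the coefficient decay $\sum_q q!\,c_q^2(2+\e)^{2q}<\infty$ coming from the hypothesis $\EE|g((2+\e)X_1)|^2<\infty$. The variance limit and the identification of the ``gap'' contribution $r=p<q$ with $q-p\ge\g$ are also correct.

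The genuine gap is your proposed \emph{core estimate}. You claim
\[
\|h_p\s_r h_q\|_{p+q-2r}^2 \;\lesssim\; \frac{1}{n}\sum_{|k|<n}|\r(k)|^r\times(\text{absolutely summable factor}),
\]
arguing that the exponents $p-r,q-r\ge 1$ yield convergent sums under $\a>1/m$. This is false in exactly the regimes that matter. For the dominant diagonal term $p=q=m$ and $r=1$ (or by symmetry $r=m-1$) one of the remaining exponents equals $1$, and $\sum_k|\r(k)|$ diverges whenever $\a\le 1$; more generally the ``remaining'' sums $\sum|\r|^{m-r}$ fail to converge on the whole interval $\a\in(1/m,1/(m-r))$. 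So you cannot isolate a single exponent as rate-determining and declare the rest summable. If you push your bound through anyway you obtain, e.g.\ for $m=2$, $\|h_2\s_1 h_2\|\lesssim n^{-\a/2}$, which for $\a\in(1/2,2/3)$ is \emph{strictly weaker} than the claimed $n^{1-2\a}$ and does not reproduce part~(b).

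What the paper does instead is recognise that the four-fold sum
\[
A_n(m,m,r)=\frac{1}{n^2}\sum_{i,j,k,\ell}|\r(i-j)|^r|\r(k-\ell)|^r|\r(i-k)|^{m-r}|\r(j-\ell)|^{m-r}
\]
has a convolution structure: with $\r_{s,n}(k)=|\r(k)|^s\mathbf 1(|k|<n)$ one gets $A_n(m,m,r)\le n^{-1}\|\r_{r,n}*\r_{m-r,n}\|_{\ell^2(\ZZ)}^2$, and then Young's inequality with \emph{carefully tuned} exponents (the $\ell^{4/3}$--$\ell^{4/3}$ pairing when $m=2$, the $\ell^2$--$\ell^1$ pairing when $m\ge3$) distributes the divergence optimally and produces the sharp rates $n^{2-4\a}$, $n^{-2\a}$, $n^{2-2m\a}$ in the respective ranges. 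Your treatment of the off-diagonal case $r=p<q$ (the ``gap'' term) is essentially right and does match the paper's direct bound $A_n(m,m+\g,m)\le n^{-1}\big(\sum|\r|^m\big)^2\sum_{|j|<n}|\r(j)|^\g$; it is the diagonal case that needs the convolution argument you are missing.
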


\begin{remark}\rm \label{rem:ExceptionalCases}
In Theorem \ref{thm:quantitative CLT} we have excluded some boundary cases, for example in part (b) the case that $\a=3/4$. It is possible to fill these gaps and to derive rates of convergence, which involve logarithmic terms. For sake of simplicity we have excluded them from our discussion.
\end{remark}

Let us briefly comment on the assumptions made in Theorem \ref{thm:quantitative CLT}. At first, one might wonder whether the condition $\EE|g((2+\varepsilon)X_1)|^2<\infty$ for some $\e>0$ is already implied by the condition that $\EE|g(X_1)|^2<\infty$. Whilst this is true for many choices of $g$ such as $g(x)=|x|^p$, this is not generally the case as the following example shows. Due to our assumptions, $X_1\sim\cN(0,1)$ has a standard Gaussian distribution and we observe that 
$$
M(a):=\EE\big[e^{aX_1^2}\big] = {1\over\sqrt{2\pi}}\int_{-\infty}^\infty e^{(a-{1\over 2})x^2}\,\dint x
= \begin{cases}
 {1\over\sqrt{1-2a}} &: a<{1\over 2} \\
\infty &: a\geq{1\over 2}\,.
\end{cases}
$$
Thus, taking $g(x):=e^{x^2/8}$ we conclude that $\EE|g(X_1)|^2=M(1/4)=\sqrt{2}$, while $\EE|g((2+\varepsilon)X_1)|^2=M(1 + \varepsilon +\varepsilon^2/4)=\infty$ for all $\varepsilon>0$.
Moreover, the motivation to impose the moment condition $\EE|g((2+\varepsilon)X_1)|^2<\infty$ for some $\e>0$ is to ensure the convergence of the corresponding sums in \eqref{eq:MainEstimate}, thus also implying that $\langle D F_n, -DLF_n\rangle_{\cH}\in L^2(\O)$ such that one can formally apply Theorem \ref{thm:Main}. As a consequence and in contrast to Section 6 in \cite{NourdinPeccatiReinertSecondOrderPoincare}, we do not need moment assumptions involving derivatives of $g$ such as $\EE|g'(X_1)|^4<\infty$, which would in turn imply that $F_n\in \DD^{1,4}$. Hence, we can dispense with additional smoothness or regularity assumptions on $g$ and are able to handle even non-continuous choices of $g$.

Theorem \ref{thm:quantitative CLT} also raises the question under which conditions the partial sums $F_n$ have a density with respect to the Lebesgue measure on the real line. 
To give an answer to this question which goes beyond the (somehow restrictive) conditions of the transformation theorem for densities, we introduce the notion of a $0$-measure-preserving map. A measurable function $f\colon \RR\to\RR$ is called $0$-measure-preserving if for all Lebesgue null sets $B\subset\RR$ also the preimage $f^{-1}(B)\subset\RR$ is a Lebesgue null set. 
Using the technical assumption that $(X_1, \ldots, X_n)$ is jointly Gaussian with a non-degenerate covariance matrix, it turns out that $F_n$ has a density with respect to the Lebesgue measure on $\RR$ if and only if $g$ is $0$-measure-preserving. An argument is given in the Appendix. Examples of functions $g$ where $F_n$ has no density are given by locally constant functions, including, for example, the case of indicator functions. In turn, the power function $g(x)=|x|^p$ is $0$-measure-preserving such that the functionals considered in Subsection \ref{subsec:power variation} below possess a density for any choice of $p>0$.

\medskip

Some parts of the proof that involve convolutions of sequences with finite support and require an application of Young's inequality are inspired by \cite{BiermeBonamiLeon2011}. We write $\ell^p(\ZZ)$ for the space of sequences $u=(u_k)_{k\in\ZZ}$ such that $\|u\|_{\ell^p(\ZZ)}:=\big(\sum_{k\in\ZZ}|u_k|^p\big)^{1/p}<\infty$ if $p\in[1,\infty)$ and $\|u\|_{\ell^\infty(\ZZ)}:=\sup_{k\in\ZZ}|u_k|<\infty$ if $p=\infty$. Now, recall that the convolution of two sequences $u,v$ on $\ZZ$ with finite support is defined as 
\[
(u * v) (k) := \sum_{j\in\ZZ} u(j) v(k-j)\,, \qquad k\in\ZZ\,,
\]
and that $u * v$ has again a finite support. Due to Young's inequality, it holds that for $1\le p,q,r\le \infty$ with $1/p + 1/q = 1+1/r$,
\be{eq:Young}
\|u * v\|_{\ell^r(\ZZ)}\le \|u\|_{\ell^p(\ZZ)} \, \|v\|_{\ell^q(\ZZ)}
\ee
for sequences $u$ and $v$ with finite support.

\begin{proof}[Proof of Theorem \ref{thm:quantitative CLT}]
It is no loss of generality to assume that $\EE[g(X_1)]=0$. Then, since $g$ is assumed to have Hermite rank equal to $m$, we have the unique Hermite expansion
\be{eq:Hermite expansion}
g(x) = \sum_{q=m}^\infty c_qH_q(x)\qquad\text{with}\qquad c_q = \frac{1}{q!} \EE[g(X_1) H_q(X_1)]\,,
\ee
where the sum converges in the $L^2$-sense, meaning that $\EE|g(X_1) - \sum_{q=m}^n c_qH_q(X_1)|^2\to0$, as $n\to\infty$.
Moreover due to our assumption that $\EE|g((2+\varepsilon)X_1)|^2<\infty$ and thanks to \eqref{eq:Hermite moments}, we have that
\be{eq:summability}
\sum_{q=m}^\infty q! \,c_q^2\,(2+\varepsilon)^{2q}<\infty\,,
\ee
see e.g.~\cite[Proposition 1.4.2]{NourdinPeccatiBook}.

The next is to observe that one can consider the Gaussian process $X = (X_k)_{k\in \ZZ}$ as a subset of an isonormal Gaussian process $\{W(h): h\in \H\}$, say, where $\H$ is a real separable Hilbert space with scalar product $\langle\, \cdot\,,\, \cdot\,\rangle_{\H}$. This means that for every $k\in\ZZ$ there exists an element $h_k\in\H$ such that $X_k =W(h_k)$ and, consequently,
\be{} 
\langle h_k, h_\ell \rangle_\H = \r(k-\ell)\qquad\text{for all}\qquad k,\ell\in\ZZ\,,
\ee
see \cite[Proposition 7.2.3]{NourdinPeccatiBook} for details.
Using that for $h\in\fH$, $I_q(h^{\otimes q}) = H_q(W(h))$ ($H_q$ is again the $q$th Hermite polynomial), we see that $g(X_k) = \sum_{q=m}^\infty c_q I_q(h_k^{\otimes q})$ and hence
\be{}
F_n = \sum_{q=m}^\infty I_q(f_{q,n})\qquad\text{with}\qquad f_{q,n} = \frac{c_q}{\sqrt{n}} \sum_{k=1}^n h_k^{\otimes q} \in \H^{\odot q}\,.
\ee
For $p,q\ge m$ and $r=1, \ldots, \min(p,q)$ we compute
\begin{align*}
\|f_{q,n} \|^2_{\H^{\otimes q}} &= \frac{c_q^2}{n}\sum_{k,\ell=1}^n\r^q(k-\ell)\,, \\
f_{p,n} \otimes_r f_{q,n} &= \frac{c_p\,c_q}{n}\sum_{k,\ell=1}^n\r^r(k-\ell) \, [h_k^{\otimes p-r} \otimes h_\ell^{\otimes q-r}]\,, \\
\|f_{p,n} \otimes_r f_{q,n}  \|^2_{\H^{\otimes p+q-2r}} &=\frac{c_p^2\,c_q^2}{n^2}\sum_{i,j,k,\ell=1}^n\r^r(i-j)\r^r(k-\ell) \,\r^{p-r}(i-k)\r^{q-r}(j-\ell) \,. 
\end{align*}
Since $\sum_{k\in\ZZ} |\r(k)|^q<\infty$ for all $q\ge m$, one has by dominated convergence that
\[
\|f_{q,n} \|^2_{\H^{\otimes q}} = c_q^2\sum_{k=-(n-1)}^{n-1}\frac{n-|k|}{n} \r^q(k)
\to c_q^2 \sum_{k\in\ZZ} \r^q(k) \le c_q^2 \sum_{k\in\ZZ}|\r(k)|^m<\infty\,,
\]
as $n\to\infty$. With respect to the summability condition \eqref{eq:summability} and the variance representation \eqref{eq:Variance}, this implies that, as $n\to\infty$,
\be{eq:Varianz}
\sigma_n^2:=\Var(F_n) = \sum_{q=m}^\infty q!\|f_{q,n}\|^2\to\sigma^2 := \sum_{q=m}^\infty q!\, c_q^2 \sum_{k\in\ZZ} \r^q(k)  \in[0,\infty)\,.
\ee
In view of our main bound \eqref{eq:MainEstimate} we need to compute the asymptotic order of the quantity
\be{eq:A_n}
A_n(p,q,r):= {1\over n^2}\sum_{i,j,k,\ell=1}^n|\r(i-j)|^r |\r(k-\ell)|^r |\r(i-k)|^{p-r} |\r(j-\ell)|^{q-r}\,,\quad p,q\ge m
\ee
for $r=1, \ldots, \min(p,q)$ if $p\neq q$ and for $r=1, \ldots, q-1$ if $p=q$. 
We assume without loss of generality that $p\le q$ and distinguish two cases. First, let $r=1, \ldots, p-1$. Then
\[
A_n(p,q,r)\le A_n(p,p,r) = A_n(p,p,p-r) \le A_n(m,m,\min(r,p-r, m-1))\,.
\]
Second, assume that $m\le r =p<q$ such that there exists some integer $t\ge1$ with $p+t=q$. Note that we only have to consider those cases where $t$ is greater than or equal to the chaotic gap $\g$. Then
\[
A_n(p,q,p)\le A_n(m,m+t,m)\le A_n(m,m+\g,m)\,.
\]
By index shifting, one obtains
\begin{align*}
A_n(p,q,r)&= {1\over n^2} \sum_{i,j,k,\ell=0}^{n-1}|\r(|i-j|)|^r |\r(|k-\ell|)|^r |\r(|i-k|)|^{p-r} |\r(|j-\ell|)|^{q-r} \\
&={1\over n^2} \sum_{\ell=0}^{n-1} \,\sum_{j=-l}^{n-1-\ell}\, \sum_{i=-j}^{n-1-j}\,\sum_{k=-\ell}^{n-1-\ell}
|\r(|i|)|^r |\r(|k|)|^r |\r(|i+j-k|)|^{p-r} |\r(|j|)|^{q-r} \\
& \le {1\over n} \sum_{|j|\le n-1} \, \sum_{i=-j}^{n-1-j} \, \sum_{|k|\le n-1} 
|\r(|i|)|^r |\r(|k|)|^r |\r(|k- (i+j)|)|^{p-r} |\r(|j|)|^{q-r} \,.
\end{align*}
This means that in the second case we get
\be{eq:rate with e}
\begin{split}
A_n(p,p+t,p) \le A_n(m,m+\g,m)&\le {1\over n}\left(\,\sum_{k\in\ZZ} |\r(k)|^m\right)^2\sum_{|j|\le n-1} |\r(j)|^\g\\
& \lesssim
\begin{cases}
n^{-1} &: \a>\frac{1}{\g} \\
n^{-\a\g} &: \a<\frac{1}{\g}\,.
\end{cases}
\end{split}
\ee
Now, let us come back to the first case and let $r = 1, \ldots, m-1$. For any integer $s\ge1$ introduce the truncated sequence
\be{eq:truncation}
\r_{s,n}(k) = |\r(k)|^s\, \one(|k|\le n-1)\,.
\ee
Then, using again a careful index shifting, we see that
\begin{align*}
A_n(m,m,r) 
&= {1\over n^2}\sum_{i,j,k,\ell=0}^{n-1}\r_{r,n}(|i-j|) \r_{r,n}(|k-\ell|) \r_{m-r,n}(|i-k|) \r_{m-r,n}(|j-\ell|) \\
&\le {1\over n^2} \sum_{i,j=0}^{n-1} (\r_{r,n} * \r_{m-r,n}) (|i-j|)^2 \\
&\le {1\over n} \sum_{|j|\le n-1} (\r_{r,n} * \r_{m-r,n}) (j)^2 \\
&\le {1\over n}\, \|\r_{r,n} * \r_{m-r,n}\|^2_{\ell^2(\ZZ)}\,.
\end{align*}
Now, we apply Young's inequality \eqref{eq:Young} to derive a rate for $\|\r_{r,n} * \r_{m-r,n}\|^2_{\ell^2(\ZZ)}$. For $m=2$ it holds that 
\[
\|\r_{1,n} * \r_{1,n}\|^2_{\ell^2(\ZZ)}\le \|\r_{1,n} \|^4_{\ell^{4/3}(\ZZ)} = \left(\sum_{|k|\le n-1}|\r(k)|^{4/3}\right)^3\lesssim 
\begin{cases}
1 &: \a>\frac34\\
n^{3-4\a} &: \a< \frac{3}{4}
\end{cases}
\]
and if $m>2$ we find that
\[
\|\r_{r,n} * \r_{m-r,n}\|^2_{\ell^2(\ZZ)}\le \|\r_{r,n} \|^2_{\ell^2(\ZZ)}\|\r_{m-r,n} \|^2_{\ell^1(\ZZ)}\,.
\]
Moreover,
\begin{align}\label{eq:rate1}
\|\r_{r,n} \|^2_{\ell^2(\ZZ)} &= \sum_{|k|\le n-1} |\r(k)|^{2r} \lesssim
\begin{cases}
1 &: \a>\frac{1}{2r}\\
n^{1-2r\a} &: \a<\frac{1}{2r}\,,
\end{cases} \\ 
\label{eq:rate2}
\|\r_{m-r,n} \|^2_{\ell^1(\ZZ)} &= \left(\sum_{|k|\le n-1} |\r(k)|^{m-r}\right)^2 \lesssim 
\begin{cases}
1 &: \a>\frac{1}{m-r}\\
n^{2-2(m-r)\a} &: \a<\frac{1}{m-r}\,.
\end{cases}
\end{align}
Consequently, by changing the roles of $\r_{r,n} $ and $\r_{m-r,n}$, one has the estimate 
\begin{align*}
\|\r_{r,n} * \r_{m-r,n}\|^2_{\ell^2(\ZZ)}
&\le \max_{r=1, \ldots, [(m-1)/2]} \|\r_{r,n} \|^2_{\ell^2(\ZZ)}\|\r_{m-r,n} \|^2_{\ell^1(\ZZ)} \\
&\lesssim
\begin{cases}
1 &: \a>\frac12\\
n^{1-2\a} &: \a\in\left(\frac{1}{m-1}, \frac12\right) \\
n^{3-2m\a} &: \a<\frac{1}{m-1} \,.
\end{cases}
\end{align*}
In summary, we arrive at the bounds
\begin{align}
A_n(2,2,1)
&\lesssim
\begin{cases}
n^{-1} &: \a>\frac34\\
n^{2-4\a} &: \a< \frac{3}{4}\,,
\end{cases}\\[1em]
A_n(m,m,r)&\lesssim
\begin{cases}
n^{-1} &: \a>\frac12\\
n^{-2\a} &: \a\in\left(\frac{1}{m-1}, \frac12\right) \\
n^{2-2m\a} &: \a<\frac{1}{m-1} \,.
\end{cases}
\end{align}

Now, one can plug-in the estimates into the right hand side of \eqref{eq:MainEstimate} and obtain 
by defining $A_n:=\big(\max_{r=1,\ldots,[(m-1)/2]}A_n(m,m,r) + A_n(m, m+\g, m)\big)^{1/2}$ that
\begin{align}\nonumber
&\sum_{p=m}^\infty p\sum_{r=1}^{p-1} (r-1)!{p-1\choose r-1}^2\sqrt{(2(p-r))!}\,\Big\|\tfrac{f_{p,n}}{\sigma_n}\s_r \tfrac{f_{p,n}}{\sigma_n}\,\Big\|_{2(p-r)}\\ \nonumber
&\qquad + \sum_{p,q=m\atop p\neq q}^\infty p\sum_{r=1}^{\min(p,q)}  (r-1)!{p-1\choose r-1}{q-1\choose r-1}\sqrt{(p+q-2r)!}\,\Big\|\tfrac{f_{p,n}}{\sigma_n}\s_r \tfrac{f_{q,n}}{\sigma_n}\,\Big\|_{p+q-2r} \\ \nonumber
&\le \frac{A_n}{\sigma_n^2}
\bigg\{ \sum_{p=m}^\infty c_p^2\, p \sum_{r=1}^{p-1} (r-1)!{p-1\choose r-1}^2\sqrt{(2(p-r))!}\\
&\qquad + \sum_{p,q=m\atop p\neq q}^\infty |c_p| |c_q|\,p \sum_{r=1}^{\min(p,q)}  (r-1)!{p-1\choose r-1}{q-1\choose r-1}\sqrt{(p+q-2r)!}\bigg\} \label{eq:two sums}\,.
\end{align}
The claim follows upon proving that the two sums in the brackets converge. Put
\be{eq:B_1}
B_1(p):= \sum_{r=1}^{p-1} (r-1)!{p-1\choose r-1}^2\sqrt{(2(p-r))!}\,,
\ee
note that due to \eqref{eq:summability}, the first sum converges provided that $B_1(p) \lesssim (p-1)!\,4^{p}\lesssim (p-1)!\,(2+\varepsilon)^{2p}$.
However, using the Cauchy-Schwarz inequality, Vandermonde's identity for binomial coefficients and Stirling's formula, we see that
\begin{align*}
\frac{B_1(p)}{(p-1)!} &= \sum_{r=1}^{p-1}{p-1\choose r-1}{2(p-r)\choose p-r}^{1/2}
\le \left(\sum_{r=1}^{p-1}{p-1\choose r-1}^2\right)^{1/2} \, \left(\sum_{r=1}^{p-1}{2(p-r)\choose p-r}\right)^{1/2} \\
&= \left({2(p-1)\choose p-1} - 1\right)^{1/2} \, \left(\sum_{r=1}^{p-1}{2r\choose r}\right)^{1/2}\\
&\le (p-1)^{1/2}\, {2(p-1)\choose p-1} 
\sim \frac{4^{p-1}}{\sqrt{\pi}}\,.
\end{align*}
To show that the sum in \eqref{eq:two sums} converges, it is sufficient to prove that 
\be{eq:sum convergence}
\sum_{p,q=m\atop p\neq q}^\infty (c_p^2 + c_q^2)\,p \sum_{r=1}^{\min(p,q)}  (r-1)!{p-1\choose r-1}{q-1\choose r-1}\sqrt{(p+q-2r)!}<\infty\,,
\ee
thanks to the inequality $ab\le a^2 + b^2$, valid for all $a,b\in\RR$.
To this end, observe that for $p\ge m+1$, again by Stirling's formula,
\begin{align*}
& \sum_{q=m}^{p-1} \sum_{r=1}^{q}  (r-1)!{p-1\choose r-1}{q-1\choose r-1}\sqrt{(p+q-2r)!} \\
&\le (p-1)!\sum_{q=m}^{p-1} \sum_{r=1}^{q}  {q-1\choose r-1}{2(p-r)\choose p-r}^{1/2} \\
&\le (p-1)! \sum_{q=m}^{p-1}   {2(q-1)\choose q-1}^{1/2} q^{1/2}\,{2(p-1)\choose p-1}^{1/2} \\
&\le (p-1)!\, (p-1)^{3/2}\,{2(p-1)\choose p-1} \\
&\sim (p-1)!\, (p-1) \frac{4^{p-1}}{\sqrt{\pi}} \\
&\lesssim (p-1)! \frac{(2+\varepsilon)^{2p-2}}{\sqrt{\pi}}\,,
\end{align*}
for any $\varepsilon>0$,
which implies \eqref{eq:sum convergence} in view of \eqref{eq:summability}. 
Thus, one can formally apply Theorem \ref{thm:Main} such that there is a constant $C\in(0,\infty)$ only depending on the class $\Phi$ and $g$ (or more specifically on the sequence $(c_q)_{q\in\NN}$) such that 
\[
d_\Phi\left(\frac{F_n}{\sigma_n}, Z\right)\le C\,\frac{A_n}{\sigma_n^2}\,.
\]
The proof is complete.
\end{proof}

\begin{remark}\rm \label{rem:positive variance}
If $c_{2q}\neq0$ for some $q\ge1$, it follows immediately that $\|f_{2q,n}\|^2\to c_{2q}^2 \sum_{k\in\ZZ} \r^{2q}(k)>0$, as $n\to\infty$, and hence $\sigma^2>0$. As a consequence, we see that in this situation, the variance of $F_n$ has no influence on the rates in Theorem \ref{thm:quantitative CLT}.
\end{remark}

\begin{remark}\rm \label{rem:chaotic gap}
The chaotic gap $\g$ of the function $g$ is visible in Theorem \ref{thm:quantitative CLT} only in the case $\g=1$. As our proof shows, this is just a coincidence, since for $\g=2$ the terms involving the chaotic gap are of the same size as the other leading terms and for $\g>2$ become even subdominant in our situation. Consequently, for $\g\ge2$ one gets exactly the same rates as for $\g=\infty$, and the rates in Theorem \ref{thm:quantitative CLT} coincide with the rates given in \cite[Proposition 2.15]{BiermeBonamiLeon2011}.
\end{remark}

Let us finally in this section consider the set-up of Theorem \ref{thm:quantitative CLT} in the special case that $g$ has Hermite rank $m=1$ (which has been excluded in the statement). Clearly, if $g$ is linear (so $m=1$ and $\g=\infty$), $F_n$ is already centred Gaussian and $F_n/\sqrt{\Var(F_n)}$ coincides in distribution with the standard Gaussian random variable $Z$. However, an inspection of the proof of Theorem \ref{thm:quantitative CLT} shows that for non-linear $g$ with $m=1$ and arbitrary chaotic gap $1\le\g<\infty$ the leading term in \eqref{eq:MainEstimate} is $\|f_{1,n} \otimes_1 f_{1+\g,n}\|$, which finally yields a rate of convergence of order $n^{-1/2}$ as long as $\a>1=1/m$.

\subsection{Power variations of the fractional Brownian motion and processes from the Cauchy class}\label{subsec:power variation}

We build on the example we have seen in the previous section and let again $X=(X_k)_{k\in\ZZ}$ be a one-dimensional centred and stationary Gaussian process with unit variance and with covariance function $\r$ such that the assumption \eqref{eq:AssumtionsGP} is satisfied. Moreover, we assume again that for any $n\ge1$ the vector $(X_1, \ldots, X_n)$ is jointly Gaussian with a non-degenerate covariance matrix. As function $g$ we take now
\be{eq:power function}
g(x) := |x|^p - \mu_p\,, \quad p>0\,, \qquad \text{where}\qquad \mu_p=\EE|X_1|^p\,.
\ee
In this situation, our random variable $F_n$ defined by \eqref{eq:F_n} becomes a so-called (centred) power variation of the Gaussian process $X$. The asymptotic behaviour of these functionals has attracted considerable interest in probability theory and mathematical statistics, see \cite{BarndorffNielsenEtAl1,BiermeBonamiLeon2011}, for example, as well as the references cited therein.

Now, we notice that unless in the special case $p=2$ of the quadratic variation, the Hermite expansion of the function $g$ at \eqref{eq:power function} is not finite. Moreover, since $g$ is an even function it holds that the Hermite rank of $g$ is $m=2$ and, if $p\neq2$, that the chaotic gap is $\g=2$. Consequently, part (b) of Theorem \ref{thm:quantitative CLT} applies. (Note also that $\EE|g(X_1)|^2 = \mu_{2p}<\infty$ and $\EE|g((2+\varepsilon)X_1)|^2 <\infty$.)

Instead of repeating the bounds, let us consider a more concrete situation. We assume that the Gaussian process $X=(X_k)_{k\in\ZZ}$ is obtained from the increments of a fractional Brownian motion $B^H=(B_t^H)_{t\in\RR}$ with Hurst parameter $H\in(0,1)$. Let us recall that this means that $B^H$ is a centred Gaussian process in continuous time with covariance function given by 
$$
\EE[B_s^HB_t^H] = {1\over 2}\big(|s|^{2H}+|t|^{2H}-|s-t|^{2H}\big)\,,\qquad s,t\in\RR\,,
$$
and that $X_k=B_{k+1}^H-B_k^H$ for all $k\in\ZZ$. Then $X$ has covariance function
$$
\r(j)={1\over 2}\big(|j+1|^{2H}+|j-1|^{2H}-2|j|^{2H}\big)
$$
and \eqref{eq:AssumtionsGP} is satisfied with $\a=2-2H$. Moreover, since $(X_1, \ldots, X_n)$ possesses a non-degenerate covariance function,
$$
F_n = {1\over\sqrt{n}}\sum_{k=1}^n\{|X_k|^p-\mu_p\}
$$
has a density and moreover, it is known that $\var(F_n)$ converges, as $n\to\infty$, to a positive and finite constant; see Remark \ref{rem:positive variance}. Thus, the following result is a direct consequence of Theorem \ref{thm:quantitative CLT}.

\begin{corollary}
Let $d_\Phi$ be one of the probability distances $d_{TV},d_K,d_W$ or $d_{bW}$. Then
$$
d_\Phi\Bigg({F_n\over\sqrt{\var(F_n)}}\Bigg) \lesssim \begin{cases}
n^{-1/2} &: H\in\big(0,{5\over 8}\big) \\
n^{4H-3} &: H\in\big({5\over 8},{3\over 4}\big)\,.
\end{cases}
$$
\end{corollary}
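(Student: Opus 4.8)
The plan is to read off this corollary as the specialization of part~(b) of Theorem~\ref{thm:quantitative CLT} to the fractional Brownian motion setting, so the work consists almost entirely in checking the hypotheses and then performing the change of variables $\a=2-2H$. First I would record the structural facts already assembled in the surrounding discussion: the function $g(x)=|x|^p-\mu_p$ is even, hence has Hermite rank $m=2$, and its chaotic gap is $\g=2$ when $p\neq2$ and $\g=\infty$ when $p=2$. In either case $\g\ge2$, so it is precisely part~(b) of Theorem~\ref{thm:quantitative CLT} that governs the rate; by Remark~\ref{rem:chaotic gap} the actual value of $\g$ beyond $2$ does not affect the exponent, so the two sub-cases $p=2$ and $p\neq2$ yield the same bound.

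Next I would confirm the remaining hypotheses of Theorem~\ref{thm:quantitative CLT}. Since $X_1\sim\cN(0,1)$ has finite moments of all orders, both $\EE|g(X_1)|^2=\mu_{2p}<\infty$ and $\EE|g((2+\e)X_1)|^2<\infty$ hold automatically for every $\e>0$. The requirement $\a>1/m=1/2$ becomes $2-2H>1/2$, that is, $H<3/4$, which is exactly the range appearing in the corollary. For the total variation and Kolmogorov distances one additionally needs $F_n$ to possess a Lebesgue density; this follows because $g(x)=|x|^p$ is $0$-measure-preserving and $(X_1,\dots,X_n)$ is non-degenerate Gaussian, as explained before the statement. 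With these checks in place, Theorem~\ref{thm:quantitative CLT}(b) is applicable.

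Finally I would translate the bound. Substituting $\a=2-2H$ into part~(b), the condition $\a>3/4$ becomes $H<5/8$ and gives the rate $n^{-1/2}$, while $\a\in(1/2,3/4)$ becomes $H\in(5/8,3/4)$ and gives $n^{1-2\a}=n^{4H-3}$. It remains to dispose of the prefactor $1/\Var(F_n)$ appearing in Theorem~\ref{thm:quantitative CLT}: since $c_2\neq0$ (the Hermite rank equals $2$), Remark~\ref{rem:positive variance} yields $\Var(F_n)\to\sigma^2>0$, so $1/\Var(F_n)$ is bounded and absorbs into the implied constant of $\lesssim$. I do not anticipate a genuine obstacle here; the only points requiring care are the density claim for $d_{TV}$ and $d_K$ and the strict positivity of the limiting variance, both of which are already established in the text.
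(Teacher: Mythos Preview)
Your proposal is correct and matches the paper's approach exactly: the paper states the corollary as ``a direct consequence of Theorem~\ref{thm:quantitative CLT}'' without further argument, and the verification of hypotheses (Hermite rank $m=2$, chaotic gap $\g\ge2$, moment and density conditions, positivity of the limiting variance via Remark~\ref{rem:positive variance}) together with the substitution $\a=2-2H$ that you spell out is precisely what is intended.
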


To the best of our knowledge, the previous corollary is the only known result showing that for general power variations with $p>0$ the speed of convergence in the central limit theorem is universal. By this we mean that the rate we obtain for general $p>0$ coincides with the known rate for the quadratic variation functional, where $p=2$. This should also be compared with the discussion in Remark \ref{rem:chaotic gap} and especially with the result in \cite[Proposition 2.15]{BiermeBonamiLeon2011}.

\begin{remark}\rm 
We emphasize that for $H>3/4$ the fluctuations of $F_n$ are no more Gaussian, while for the boundary case $H=3/4$ one can derive a logarithmic rate of convergence. However and as already discussed in Remark \ref{rem:ExceptionalCases}, we will not pursue this direction in the present paper.
\end{remark}

Another flexible and prominent class of random processes $X=(X_k)_{k\in\ZZ}$ to which our theory applies are the members of the so-called Cauchy class. These processes are centred Gaussian and their covariance function is given by
$$
\r(j) = (1+|j|^\beta)^{-\alpha/\beta}\,,\qquad j\in\ZZ\,,
$$
where the parameters $\alpha$ and $\beta$ have to satisfy $\alpha>0$ and $\beta\in(0,2]$, see \cite{BarndorffNielsenEtAl1}. It is clear that these processes satisfy the assumptions made in the previous secion and that Theorem \ref{thm:quantitative CLT} can be applied. This shows that, if $\alpha>1/2$, the normalized power variations $F_n$ with parameter $p>0$ satisfy the quantitative central limit theorem
$$
d_{\Phi}\Bigg({F_n\over\sqrt{\var(F_n)}}\Bigg)\lesssim\begin{cases}
n^{-1/2} &: \alpha>{3\over 4}\\
n^{1-2\alpha} &: \alpha\in\big({1\over 2},{3\over 4}\big)\,,
\end{cases}
$$
where $d_\Phi$ can be any of the probability distances $d_{TV},d_K,d_W$ or $d_{bW}$, adding thereby to the limit theorems developed in \cite{BarndorffNielsenEtAl1}. Again, the rate of convergence is universal and does not depend on the choice of the power $p$.

\subsection{Functionals of Gaussian subordinated processes in continuous time revisited}

Finally, we apply our methods to functionals of Gaussian subordinated processes in continuous time. More precisely, we revisit the example from Section 6 in \cite{NourdinPeccatiReinertSecondOrderPoincare} and show that the rate of convergence there can be improved by our methods, confirming thereby the conjecture made in Remark 6.2 in \cite{NourdinPeccatiReinertSecondOrderPoincare}. That is, we consider a centred Gaussian process $X= (X_t)_{t\in\RR}$ in continuous time with stationary increments. For example, $X$ could be a (two-sided) fractional Brownian motion. The covariance function of $X$ is defined by $\rho(u-v):= \EE[(X_{u+1} - X_u)(X_{v+1} - X_v)]$, $u,v\in\RR$. It is clear that $X$ might be considered as a suitable isonormal Gaussian process, see Example 2.1.5 in \cite{NourdinPeccatiBook}.

Let $g\colon \RR\to\RR$ be a non-constant and measurable function and fix two real numbers $a<b$. For any $T>0$ define the functional 
\be{eq:F_T}
F_T = \frac{1}{\sqrt{T}}\int_{aT}^{bT} \{g(X_{u+1} - X_u) - \EE[g(Z)]\}\,\dint u\,,
\ee
where $Z\sim\cN(0,1)$ denotes a standard Gaussian random variable. To present the next result, we adopt the $\lesssim$-notation for sequences to the continuous case. In particular, we shall write $a(T)\lesssim b(T)$ for two functions $a,b:(0,\infty)\to\RR$ if the quotient $a(T)/b(T)$ stays bounded for all $T$. For simplicity, we restrict ourself to the case of the Wasserstein distance and do not investigate under which conditions the functionals $F_T$ posess a density.

\begin{proposition}\label{prop:CLT for F_T}
Assume that $\r(0)=1$ and that $\int_{\RR}|\r(u)|\,\dint u<\infty$. Further, suppose that $g:\RR\to\RR$ is a non constant and measurable function, for which $\EE|g(X_1)|^2<\infty$ and $\EE|g((2+\varepsilon)X_1)|^2<\infty$ for some $\varepsilon>0$. Then, as $T\to\infty$, one has that
\be{eq:CLT for F_T}
d_W\left(\frac{F_T}{\sqrt{\Var(F_T)}}, Z\right) \lesssim {T^{-1/2}\over\var(F_T)}\,,
\ee
Moreover, if $g$ is symmetric, then
$$
d_W\left(\frac{F_T}{\sqrt{\Var(F_T)}}, Z\right) \lesssim {T^{-1/2}}\,.
$$
\end{proposition}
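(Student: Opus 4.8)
The plan is to translate this continuous-time problem into the same contraction-norm framework developed for the discrete case in Theorem \ref{thm:quantitative CLT}, exploiting the fact that the proof there reduces everything to bounding quantities of the form $A_n(p,q,r)$. First I would realize $g(X_{u+1}-X_u)$ as a functional on a suitable isonormal Gaussian process $W=\{W(h):h\in\H\}$, writing $X_{u+1}-X_u = W(h_u)$ with $\langle h_u,h_v\rangle_\H = \r(u-v)$, exactly as in Proposition 7.2.3 of \cite{NourdinPeccatiBook}. Since $g$ has a Hermite expansion $g(x)-\EE[g(Z)] = \sum_{q=m}^\infty c_q H_q(x)$ with $m\ge1$ the Hermite rank of $g$, I would obtain the chaos decomposition
\[
F_T = \sum_{q=m}^\infty I_q(f_{q,T})\,, \qquad f_{q,T} = \frac{c_q}{\sqrt{T}}\int_{aT}^{bT} h_u^{\otimes q}\,\dint u \in \H^{\odot q}\,,
\]
with the integral defined in the Bochner sense. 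The summability condition \eqref{eq:summability} is guaranteed by the moment assumption $\EE|g((2+\varepsilon)X_1)|^2<\infty$, exactly as in the discrete setting.

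The key computation is the continuous analogue of \eqref{eq:A_n}. Applying Theorem \ref{thm:Main}, the relevant contraction norms are controlled by
\[
A_T(p,q,r) := \frac{1}{T^2}\int_{[aT,bT]^4}|\r(u_1-u_2)|^r|\r(u_3-u_4)|^r|\r(u_1-u_3)|^{p-r}|\r(u_2-u_4)|^{q-r}\,\dint u_1\,\dint u_2\,\dint u_3\,\dint u_4\,,
\]
for the appropriate range of $r$. After a change of variables isolating the differences, the hypothesis $\int_\RR|\r(u)|\,\dint u<\infty$ plays the role that $\sum_{k\in\ZZ}|\r(k)|^m<\infty$ played before; since $|\r|\le1$ one has $\int_\RR|\r(u)|^s\,\dint u\le\int_\RR|\r(u)|\,\dint u<\infty$ for every $s\ge1$. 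This is even cleaner than in Theorem \ref{thm:quantitative CLT}, because integrability of $|\r|$ directly gives integrability of every power, so no truncation, Young-inequality optimization, or case distinction on $\a$ is needed. Carrying out the integral, one factor of length $b T - aT \sim T$ survives while the other three integrations each produce a finite constant, yielding $A_T(p,q,r)\lesssim T^{-1}$ uniformly in the admissible $p,q,r$.

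From here the argument mirrors the end of the proof of Theorem \ref{thm:quantitative CLT}: setting $A_T:=\big(\max_r A_T(m,m,r)+A_T(m,m+\g,m)\big)^{1/2}\lesssim T^{-1/2}$ and plugging into \eqref{eq:MainEstimate}, the combinatorial prefactor sums $B_1(p)$ and \eqref{eq:sum convergence} converge under \eqref{eq:summability} by precisely the same Stirling and Vandermonde estimates already established, so Theorem \ref{thm:Main} applies and gives $d_W(F_T/\sigma_T, Z) \lesssim A_T/\sigma_T^2 \lesssim T^{-1/2}/\var(F_T)$, which is \eqref{eq:CLT for F_T}. For the symmetric case I would note that an even $g$ has Hermite rank $m=2$ and chaotic gap $\g=2$, and that by Remark \ref{rem:positive variance} the nonvanishing second-chaos coefficient forces $\var(F_T)$ to converge to a strictly positive limit; absorbing this positive constant removes the $\var(F_T)$ factor and yields $d_W(F_T/\sqrt{\var(F_T)},Z)\lesssim T^{-1/2}$.

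The main obstacle I anticipate is not analytic but bookkeeping: verifying rigorously that the Bochner integral defining $f_{q,T}$ lands in $\H^{\odot q}$, that the stochastic Fubini steps underlying $A_T(p,q,r)$ are justified in the continuous-parameter setting, and that $F_T\in\DD^{1,4}$ so that \eqref{eq:estimate} legitimately applies. The summability machinery already developed handles the $L^2$-convergence of the chaos series and the finiteness of the bound, so the conceptual core is identical to the discrete case; the delicate point is ensuring the measure-theoretic regularity of the continuous integrals, which I expect to dispatch by a standard approximation of the integral over $[aT,bT]$ by Riemann sums and passing to the limit.
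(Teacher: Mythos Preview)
Your approach is essentially identical to the paper's: replace sums by integrals, use $\int_\RR|\r|<\infty$ in place of the $\alpha>1$ case of the discrete proof, and recycle the combinatorial estimates verbatim. One small slip: a symmetric $g$ need not have Hermite rank $m=2$ (take $g=H_4$), so your claim ``$m=2$ and $\gamma=2$'' is false in general; what you actually need---and what Remark \ref{rem:positive variance} gives---is only that some $c_{2q}\neq 0$, which holds for any non-constant even $g$ and suffices for $\sigma^2>0$ (the paper cites Proposition 6.3 in \cite{NourdinPeccatiReinertSecondOrderPoincare} for this in the continuous setting).
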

\begin{proof}
The proof is almost literally the same as that for Theorem \ref{thm:quantitative CLT}: the sums there have to be replaced by integrals and estimated using the integrability assumption of the covariance function, which corresponds to the case $\alpha>1$ in the proof of Theorem \ref{thm:quantitative CLT}. All combinatorial considerations remain unchanged. We also remark that according to Proposition 6.3 in \cite{NourdinPeccatiReinertSecondOrderPoincare} the symmetry of the function $g$ implies the asymptotic variance $\sigma^2:=\var(F_T)$ exists in $(0,\infty)$, see also Remark \ref{rem:positive variance}. This leads to the second bound. We leave the details to the reader. 
\end{proof}

We would like to emphasize that the central limit theorem for $F_T$ in \cite{NourdinPeccatiReinertSecondOrderPoincare}, which is based on an application of the second-order Poincar\'e inequality on the Wiener space, only works under considerably stronger smoothness assumptions on the function $g$. In particular, $g$ has to be twice continuously differentiable. In this sense, Proposition \ref{prop:CLT for F_T} improves and extends the result of Section 6 in \cite{NourdinPeccatiReinertSecondOrderPoincare}. 

\section{A multivariate extension}\label{sec:Multivariate}

The purpose of this final section is to provide a multivariate extension of Theorem \ref{thm:Main}. To this end, we measure the distance between two $d$-dimensional ($d\geq 2$) random vectors $\bX$ and $\bY$ by the multivariate Wasserstein distance 
$$
d_{mW}(\bX,\bY) := \sup\big|\EE[\varphi(\bX)]-\EE[\varphi(\bY)]\big|\,,
$$
where the supremum is running over all Lipschitz functions $\varphi:\RR^d\to\RR$ with Lipschitz constant less than or equal to $1$. The following result can be seen as the natural multi-dimensional generalization of Theorem \ref{thm:Main}.

\begin{theorem}
Fix $d\geq 2$ and let $C=(C_{ij})_{i,j=1}^d$ be a positive definite $d\times d$ matrix. Suppose that $\bF=(F_1,\ldots,F_d)$ is a centred $d$-dimensional random vector with covariance matrix $C$, and such that $F_i\in L^2(\O)$ and $F_i\in\DD^{1,4}$ for all $i\in\{1,\ldots,d\}$. Further, let $\bZ\sim\cN(0,C)$ be a centred Gaussian random vector with covariance matrix $C$ and denote for each $i\in\{1,\ldots,d\}$ and $q\geq 0$ by $h_{q,i}\in\fH^{\odot q}$ the kernels of the chaotic decomposition of $F_i$. Then
\begin{align*}
d_{mW}(\bF,\bZ) &\leq c\sum_{i,j=1}^d\sum_{p=1}^\infty p\sum_{r=1}^{p-1} (r-1)!{p-1\choose r-1}^2\sqrt{(2(p-r))!}\,\|h_{p,i}\s_r h_{p,j}\|_{\fH^{\otimes 2(p-r)}}\\
&+c\sum_{i,j=1}^d\sum_{p,q=1\atop p\neq q}^\infty p\sum_{r=1}^{\min(p,q)} (r-1)!{p-1\choose r-1}{q-1\choose r-1}\sqrt{(p+q-2r)!}\,\|h_{p,i}\s_r h_{q,j}\|_{\fH^{\otimes p+q-2r}}\,,
\end{align*}
where $c=\sqrt{d}\,\|C^{-1}\|_{\rm op}\|C\|^{1/2}_{\rm op}$ and $\|\,\cdot\,\|_{\rm op}$ indicates the operator norm of the argument matrix.
\end{theorem}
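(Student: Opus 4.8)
The plan is to mimic the proof of Theorem~\ref{thm:Main}, replacing the univariate Malliavin–Stein inequality \eqref{eq:estimate} by its multivariate counterpart and then re-running the combinatorial variance computation with the two (now distinct) families of kernels. First I would invoke the multivariate Malliavin–Stein bound for the Wasserstein distance (a version of which is recorded in \cite{NourdinPeccatiBook}): since each $F_i\in\DD^{1,4}\subset\DD^{1,2}$ and $C$ is positive definite, one has
\[
d_{mW}(\bF,\bZ)\le \sqrt{d}\,\|C^{-1}\|_{\rm op}\,\|C\|_{\rm op}^{1/2}\,\sqrt{\sum_{i,j=1}^d \EE\Big[\big(C_{ij}-\langle DF_i,-DL^{-1}F_j\rangle_{\fH}\big)^2\Big]}\,.
\]
The hypothesis $F_i\in\DD^{1,4}$ guarantees, via Proposition~5.1.1 in \cite{NourdinPeccatiBook} together with the Cauchy–Schwarz inequality in $\fH$, that each scalar product $\langle DF_i,-DL^{-1}F_j\rangle_{\fH}$ lies in $L^2(\O)$, so that the right-hand side is meaningful.

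Next I would reduce the root-of-a-sum to a sum of roots: since all summands are non-negative, $\sqrt{\sum_{i,j}a_{ij}^2}\le\sum_{i,j}a_{ij}$, and hence it suffices to bound $\sqrt{\EE[(C_{ij}-\langle DF_i,-DL^{-1}F_j\rangle_{\fH})^2]}$ for each fixed pair $(i,j)$. Expanding by means of \eqref{eq:DefD} and \eqref{eq:defL-1},
\[
\langle DF_i,-DL^{-1}F_j\rangle_{\fH}=\int_A\sum_{p=1}^\infty pI_{p-1}(h_{p,i}(x,\,\cdot\,))\sum_{q=1}^\infty I_{q-1}(h_{q,j}(x,\,\cdot\,))\,\mu(\dint x)\,,
\]
and a short computation using the isometry \eqref{eq:Isometry} shows that its expectation equals $\sum_{p\ge1}p!\langle h_{p,i},h_{p,j}\rangle_{p}=\cov(F_i,F_j)=C_{ij}$. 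Consequently $\EE[(C_{ij}-\langle DF_i,-DL^{-1}F_j\rangle_{\fH})^2]$ is exactly the variance of that scalar product. Applying the subadditivity of $\sqrt{\Var(\cdot)}$ over the double index $(p,q)$ then reduces the task to computing the variance of $\int_A I_{p-1}(h_{p,i}(x,\,\cdot\,))I_{q-1}(h_{q,j}(x,\,\cdot\,))\,\mu(\dint x)$ for each $p,q\ge1$. This is precisely the quantity $T_1-T_2^2$ treated in the proof of Theorem~\ref{thm:Main}, now under the substitution $f_p\mapsto h_{p,i}$ and $f_q\mapsto h_{q,j}$; the multiplication formula \eqref{eq:MultiplicationFormula}, the stochastic Fubini theorem and the isometry \eqref{eq:Isometry} yield the same expression, this time carrying the contraction norms $\|h_{p,i}\ts_r h_{q,j}\|$. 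Bounding these by the un-symmetrized contractions $\|h_{p,i}\s_r h_{q,j}\|$ through Jensen's inequality and finally summing over $i,j$ produces exactly the claimed double sum.

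The routine combinatorics carry over verbatim, so the only genuinely new point to verify is the cancellation of the mean in the off-diagonal case $i\neq j$. In the univariate setting the $r=p$ term of $T_1$ (for $p=q$) cancelled $T_2^2$ because both equalled $((p-1)!)^2\|f_p\|_p^4$; here $T_2=(p-1)!\langle h_{p,i},h_{p,j}\rangle_p$, while the $r=p$ contraction term is $((p-1)!)^2(h_{p,i}\s_p h_{p,j})^2=((p-1)!)^2\langle h_{p,i},h_{p,j}\rangle_p^2$, so the cancellation still goes through and no spurious $\|h_{p,i}\|\,\|h_{p,j}\|$-type contribution survives. I expect the main obstacle to be bookkeeping rather than conceptual: one must make sure the multivariate Stein constant is invoked in the precise form $\sqrt{d}\,\|C^{-1}\|_{\rm op}\|C\|_{\rm op}^{1/2}$, and that the passage from the $\ell^2$- to the $\ell^1$-aggregation over $(i,j)$ introduces no further dimensional factor, which it does not since we merely use that the $\ell^2$-norm is dominated by the $\ell^1$-norm.
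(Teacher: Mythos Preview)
Your proposal is correct and follows essentially the same route as the paper's own proof: invoke the multivariate Malliavin--Stein bound from \cite{NourdinPeccatiBook} with constant $\sqrt{d}\,\|C^{-1}\|_{\rm op}\|C\|_{\rm op}^{1/2}$, pass from the $\ell^2$- to the $\ell^1$-aggregation over $(i,j)$, and then rerun the $T_1-T_2^2$ variance computation of Theorem~\ref{thm:Main} with $f_p\mapsto h_{p,i}$, $f_q\mapsto h_{q,j}$. Your explicit check that the $r=p$ cancellation survives for $i\neq j$ (since $h_{p,i}\otimes_p h_{p,j}=\langle h_{p,i},h_{p,j}\rangle_p$ matches $T_2$) is a point the paper leaves implicit.
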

\begin{proof}
As in the proof of Theorem \ref{thm:Main} we assume without loss of generality that $\fH=L^2(A)$ for some $\sigma$-finite non-atomic measure space $(A,\cA,\mu)$, write $\|\,\cdot\,\|_q$ instead of $\|\,\cdot\,\|_{\fH^{\otimes q}}$ and $f_{q,i}$ for $h_{q,i}$, $i\in\{1,\ldots,d\}$. 

From Theorem 6.1.1 in \cite{NourdinPeccatiBook} we have that
$$
d_{mW}(\bF,\bZ) \leq \sqrt{d}\,\|C^{-1}\|_{\rm op}\|C\|^{1/2}_{\rm op}\,\bigg(\sum_{i,j=1}^d\EE\Big[\Big(C_{ij}-\int_A (D_xF_i)(-D_xL^{-1}F_j)\,\mu(\dint x)\Big)^2\Big]\bigg)^{1\over 2}\,.
$$
The covariance representation \eqref{eq:Covariance} as well as the definitions \eqref{eq:DefD} and \eqref{eq:defL-1} of $D$ and $L^{-1}$, respectively, imply that the expectation is bounded by
$$
\EE\Big[\Big(\int_A\sum_{p=1}^\infty pI_{p-1}(f_{i,p}(x,\,\cdot\,))\sum_{q=1}^\infty I_{q-1}(f_{q,j}(x,\,\cdot\,))\,\mu(\dint x)-\sum_{n=1}^\infty n!\langle f_{n,i},f_{n,j}\rangle_n\Big)^2\Big]
$$
and hence
\begin{align*}
d_{mW}(\bF,\bZ) &\leq c\sum_{i,j=1}^d\sum_{p,q=1}^\infty p\Big(\EE\Big[\Big(\int_A I_{p-1}(f_{p,i}(x,\,\cdot\,))I_{q-1}(f_{q,j}(x,\,\cdot\,))\,\mu(\dint x)\\
&\qquad\qquad\qquad\qquad-\EE\int_A I_{p-1}(f_{p,i}(x,\,\cdot\,))I_{q-1}(f_{q,j}(x,\,\cdot\,))\,\mu(\dint x)\Big)^2\Big]\Big)^{1/2}\\
&=c\sum_{i,j=1}^d\sum_{p,q=1}^\infty p\Big(\var\Big(\int_A I_{p-1}(f_{p,i}(x,\,\cdot\,))I_{q-1}(f_{q,j}(x,\,\cdot\,))\,\mu(\dint x)\Big)\Big)^{1/2}
\end{align*}
The variance in the last expression equals $T_1(F,G)-T_2(F,G)^2$ with
\begin{align*}
T_1(F,G) = \int_A\int_A\EE[I_{p-1}(f_{p,i}(x,\,\cdot\,)) & I_{q-1}(f_{q,j}(x,\,\cdot\,))\\
&\times I_{p-1}(f_{p,i}(y,\,\cdot\,))I_{q-1}(f_{q,j}(y,\,\cdot\,))]\,\mu(\dint y)\mu(\dint x)
\end{align*}
and
\begin{align*}
T_2(F,G) = \EE\int_A I_{p-1}(f_{p,i}(x,\,\cdot\,))I_{q-1}(f_{q,j}(x,\,\cdot\,))\,\mu(\dint x)\,.
\end{align*}
Arguing as in the proof of Theorem \ref{thm:Main}, we see that
\begin{align*}
T_1(F,G) = \sum_{r=1}^{\min(p,q)}((r-1)!)^2{p-1\choose r-1}^2{q-1\choose r-1}^2(p+q-2r)!\|f_{p,i}\ts_r f_{q,j}\|_{p+q-2r}^2
\end{align*}
we well as
\begin{align*}
T_2(F,G) = {\bf 1}(p=q)(p-1)!\langle f_{p,i},f_{p,j}\rangle_p\,.
\end{align*}
As a consequence, we find that
\begin{align*}
& T_1(F,G)-T_2(F,G)^2 \\
&= {\bf 1}(p=q)\sum_{r=1}^{p-1}((r-1)!)^2{p-1\choose r-1}^4(2(p-r))!\|f_{p,i}\ts_r f_{p,j}\|_{2(p-r)}^2\\
&\qquad +{\bf 1}(p\neq q)\sum_{r=1}^{\min(p,q)}((r-1)!)^2{p-1\choose r-1}^2{q-1\choose r-1}^2(p+q-2r)!\|f_{p,i}\ts_r f_{q,j}\|_{p+q-2r}^2
\end{align*}
and finally
\begin{align*}
d_{mW}(\bF,\bZ) & \leq c\sum_{i,j=1}^d\sum_{p=1}^\infty p\sum_{r=1}^{p-1}(r-1)!{p-1\choose r-1}^2\sqrt{(2(p-r))!} \|f_{p,i}\s_r f_{p,j}\|_{2(p-r)}\\
& + c\sum_{i,j=1}^d\sum_{p,q=1\atop p\neq q}^\infty p\sum_{r=1}^{\min(p,q)}(r-1)!{p-1\choose r-1}{q-1\choose r-1}\sqrt{(p+q-2r)!}\|f_{p,i}\s_r f_{q,j}\|_{p+q-2r}\,.
\end{align*}
This completes the proof.
\end{proof}

\section{Appendix}
Let $\bar{\mathcal B}(\RR^n)$ be the completion of the Borel $\sigma$-field $\cB(\RR^n)$ with respect to the $n$-dimensional Lebesgue measure $\l^n\colon \bar \cB(\RR^n)\to[0,\infty]$. We call a $\bar \cB(\RR)$-measurable function $f\colon \RR^n\to\RR^n$ $0$-measure-preserving if $\l^n(B)=0$ for all $B\in\bar\cB(\RR^n)$ implies that $\l^n(f^{-1}(B))=0$.

\begin{lemma}
Let $g\colon \RR\to\RR$ be a $\bar \cB(\RR)$-measurable function. For fixed $n\ge1$ define the measurable function $g_n\colon \RR^n\to\RR^n$ by applying $g$ to each coordinate, that is, $g_n(x_1, \ldots, x_n) := (g(x_1), \ldots, g(x_n))$, $(x_1,\ldots,x_n)\in\RR^n$. Then $g$ is $0$-measure-preserving if and only if $g_n$ is $0$-measure-preserving.
\end{lemma}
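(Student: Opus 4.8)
The plan is to prove the two implications separately, the common engine being the Fubini--Tonelli theorem together with the observation that ``$0$-measure-preserving'' is precisely the statement that the push-forward of $\l^1$ under $g$ is absolutely continuous with respect to $\l^1$. I would treat first the easier implication, that $g_n$ being $0$-measure-preserving forces $g$ to be so. Given $B\in\bar\cB(\RR)$ with $\l^1(B)=0$, the product $B^n:=B\times\cdots\times B$ lies in $\bar\cB(\RR^n)$ (a product of Lebesgue sets is Lebesgue) and satisfies $\l^n(B^n)=(\l^1(B))^n=0$. Since $g_n^{-1}(B^n)=(g^{-1}(B))^n$, the assumption gives $\l^n((g^{-1}(B))^n)=0$, and by Tonelli $\l^n((g^{-1}(B))^n)=(\l^1(g^{-1}(B)))^n$; hence $\l^1(g^{-1}(B))=0$, as required, where $g^{-1}(B)\in\bar\cB(\RR)$ by measurability of $g$.

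For the converse I would argue by induction on $n$, the case $n=1$ being trivial. The key reduction is to Borel null sets: if $B\in\bar\cB(\RR^n)$ has $\l^n(B)=0$, I choose a Borel null hull $W\supseteq B$, so that $g_n^{-1}(B)\subseteq g_n^{-1}(W)$ and it suffices to show $\l^n(g_n^{-1}(W))=0$. A short preliminary verifies that $g_n$ is Borel-to-Lebesgue measurable: the coordinate maps are Lebesgue measurable, so $g_n^{-1}(R)\in\bar\cB(\RR^n)$ for every rectangle $R$, and the rectangles generate $\cB(\RR^n)$. In particular $g_n^{-1}(W)\in\bar\cB(\RR^n)$, and once its nullity is established, completeness of $\l^n$ simultaneously yields that $g_n$ is $\bar\cB/\bar\cB$-measurable and $0$-measure-preserving.

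For the inductive step I would write $\RR^n=\RR\times\RR^{n-1}$ with first coordinate $s$ and use that every section $W_s=\{z:(s,z)\in W\}$ of the Borel set $W$ is Borel. Since $\int_\RR\l^{n-1}(W_s)\,\l^1(\dint s)=\l^n(W)=0$, the Borel set $E:=\{s:\l^{n-1}(W_s)>0\}$ is $\l^1$-null. Now the section of the preimage is $(g_n^{-1}(W))_t=g_{n-1}^{-1}(W_{g(t)})$; whenever $g(t)\notin E$ the set $W_{g(t)}$ is Borel and null, so the induction hypothesis makes $g_{n-1}^{-1}(W_{g(t)})$ a $\l^{n-1}$-null set. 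Because $E$ is null and $g$ is $0$-measure-preserving, $g^{-1}(E)$ is $\l^1$-null, whence these sections are null for $\l^1$-a.e.\ $t$. Applying the Fubini--Tonelli theorem, in its form for the completed Lebesgue $\sigma$-field, to the measurable set $g_n^{-1}(W)$ then gives $\l^n(g_n^{-1}(W))=\int_\RR\l^{n-1}((g_n^{-1}(W))_t)\,\l^1(\dint t)=0$, since the (nonnegative) integrand vanishes off the null set $g^{-1}(E)$.

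The main obstacle is not this measure-theoretic core but the bookkeeping with the completed $\sigma$-field: one must avoid the apparent circularity of invoking Tonelli for $g_n^{-1}(W)$ before knowing that it is measurable. Passing to a Borel null hull $W$ and establishing Borel-to-Lebesgue measurability of $g_n$ directly breaks this loop, after which the completed Fubini--Tonelli theorem and completeness of $\l^n$ supply both the measurability of the relevant preimages and their nullity at once.
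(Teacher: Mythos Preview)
Your proof is correct and follows essentially the same route as the paper: induction on $n$, Fubini/Tonelli applied to sections, and the identity $(g_n^{-1}(W))_t=g_{n-1}^{-1}(W_{g(t)})$ together with the fact that $g^{-1}$ of the ``bad'' set of first coordinates is null. The only notable difference is one of rigor rather than strategy: the paper works directly with an arbitrary $B\in\bar\cB(\RR^2)$ and tacitly treats its sections as Lebesgue sets, whereas you explicitly pass to a Borel null hull $W\supseteq B$ to ensure all sections are Borel and to avoid the circularity of invoking Tonelli on $g_n^{-1}(B)$ before its measurability is established. That extra bookkeeping is a genuine improvement in precision, but the underlying argument is the same.
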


\begin{proof} If $n=1$, then there is nothing to show. So, we let $n>1$ and assume that $g$ is $0$-measure-preserving. Now, the assertion follows by induction on $n$. For this reason, it is sufficient to restrict to the case that $n=2$. For any $B\in\bar\cB(\RR^2)$ and any $x\in\RR$ we write $B_x:=\{y\in\RR\colon (x,y)\in B\}$, which is an element of $\bar\cB(\RR)$. 

Choose a $B\in\bar \cB(\RR^2)$ with $\l^2(B)=0$. Then 
\[
\l^2(B) = \int_{\RR} \l^1(B_x)\,\l^1(\dint x)=0\,.
\]
As a consequence, the set $N:=\{x\in\RR\colon \l^1(B_x)>0\}$ satisfies $\l^1(N)=0$. Now, let $A:=g_2^{-1}(B)$. Then, for each $y\in\RR$, one has that
\begin{align*}
A_y&=\{z\in\RR\colon (y,z)\in A\} 
=\{z\in\RR\colon (g(y), g(z))\in B\} \\
&=\{z\in\RR\colon g(z) \in B_{g(y)}\} = g^{-1}(B_{g(y)})\,.
\end{align*}
Due to our assumptions on $g$ it holds that $\{y\in\RR\colon \l^1(A_y)>0\} = \{y\in\RR\colon \l^1(B_{g(y)})>0\} = g^{-1}(N)$, which is a set of Lebesgue measure $0$. Now, the claim follows upon observing that 
\[
\l^2(A) = \int_{\RR}\l^1(A_y)\,\l^1(\dint y) =\int_{g^{-1}(N)}\l^1(A_y)\,\l^1(\dint y)=0\,.
\]
On the other hand, if $g_n$ is $0$-measure-preserving, then $g$ is also $0$-measure-preserving by considering sets of the form $B_1 \times \cdots \times B_n$ for $B_i\in\bar\cB(\RR)$ where $\l^1(B_1) = \cdots = \l^1(B_n)=0$.
\end{proof}

\begin{lemma}\label{eq:abs cont0}
Let $\mu$ be a measure on $\bar\cB(\RR^n)$, which is equivalent to the Lebesgue measure $\l^n$ and let $f\colon \RR^n\to\RR^n$ be measurable function. Then $\mu\circ f^{-1}$ is absolutely continuous with respect to $\l^n$ if and only if $f$ is $0$-measure-preserving. 
\end{lemma}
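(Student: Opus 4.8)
The plan is to unravel the three definitions involved and to observe that the claim reduces to a short chase, one line for each direction of the equivalence. Recall that the pushforward measure satisfies $(\mu\circ f^{-1})(B)=\mu(f^{-1}(B))$ for $B\in\bar\cB(\RR^n)$, which is well-defined because the measurability of $f$ guarantees $f^{-1}(B)\in\bar\cB(\RR^n)$. Thus $\mu\circ f^{-1}\ll\l^n$ means precisely that $\l^n(B)=0$ forces $\mu(f^{-1}(B))=0$, whereas $f$ being $0$-measure-preserving means that $\l^n(B)=0$ forces $\l^n(f^{-1}(B))=0$. The key structural fact is that the hypothesis that $\mu$ is equivalent to $\l^n$ supplies \emph{both} $\mu\ll\l^n$ and $\l^n\ll\mu$, and I would invoke exactly one of these two implications in each direction.

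First I would prove that $0$-measure-preservation implies $\mu\circ f^{-1}\ll\l^n$. Fixing $B\in\bar\cB(\RR^n)$ with $\l^n(B)=0$, the $0$-measure-preserving property gives $\l^n(f^{-1}(B))=0$, and then $\mu\ll\l^n$ yields $\mu(f^{-1}(B))=0$, i.e.\ $(\mu\circ f^{-1})(B)=0$. Hence $\mu\circ f^{-1}\ll\l^n$.

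Conversely, assuming $\mu\circ f^{-1}\ll\l^n$, I would again fix $B$ with $\l^n(B)=0$. Absolute continuity of the pushforward gives $\mu(f^{-1}(B))=(\mu\circ f^{-1})(B)=0$, and the reverse absolute continuity $\l^n\ll\mu$ then forces $\l^n(f^{-1}(B))=0$, so that $f$ is $0$-measure-preserving.

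I do not expect a genuine obstacle here: the statement is a pure definition chase once one recognises the symmetric role played by the two halves $\mu\ll\l^n$ and $\l^n\ll\mu$ of the equivalence. The only points requiring minor care are keeping track of which half is used in which direction, and noting that every preimage $f^{-1}(B)$ lies in the completed $\sigma$-field $\bar\cB(\RR^n)$, so that each quantity appearing above is a legitimate value of the corresponding measure.
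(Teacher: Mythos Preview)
Your proposal is correct and is exactly the routine definition chase that the paper alludes to when it writes ``The proof is standard.'' There is nothing to add or compare.
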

\begin{proof}
The proof is standard.
\end{proof}

\begin{lemma}
If a random vector $(X_1, \ldots, X_n)$ has a density with respect to $\l^n$ for some $n\in\NN$, then also $X_1 + \cdots + X_n$ has a density with respect to $\l^1$.
\end{lemma}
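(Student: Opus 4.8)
The plan is to show that the distribution of the sum $S:=X_1+\cdots+X_n$ is absolutely continuous with respect to $\l^1$ by exhibiting a density explicitly via integration. First I would let $p\colon\RR^n\to[0,\infty)$ denote the density of the random vector $(X_1,\ldots,X_n)$ with respect to $\l^n$, whose existence is the hypothesis. The key idea is that the map $\Phi\colon\RR^n\to\RR^n$ defined by $\Phi(x_1,\ldots,x_n):=(x_1+\cdots+x_n,x_2,\ldots,x_n)$ is a linear bijection with determinant $1$, so it pushes $\l^n$ forward to $\l^n$ and the image vector $(S,X_2,\ldots,X_n)$ again has a density with respect to $\l^n$, namely $\tilde p(s,x_2,\ldots,x_n)=p(s-x_2-\cdots-x_n,x_2,\ldots,x_n)$ by the standard change-of-variables formula for densities under an invertible linear map.

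Having reduced to an $n$-dimensional vector whose \emph{first} coordinate is $S$, I would then invoke the elementary fact that a marginal of an absolutely continuous random vector is itself absolutely continuous: integrating out the remaining coordinates yields a density for $S$. Concretely, the function
\[
f_S(s):=\int_{\RR^{n-1}}\tilde p(s,x_2,\ldots,x_n)\,\l^{n-1}(\dint(x_2,\ldots,x_n))
\]
is $\bar\cB(\RR)$-measurable by Tonelli's theorem, is non-negative, integrates to $1$, and satisfies $\PP(S\in B)=\int_B f_S\,\dint\l^1$ for every $B\in\bar\cB(\RR)$. The verification of this last identity is a direct application of Fubini--Tonelli: one writes $\PP(S\in B)=\EE[\one_B(S)]=\int_{\RR^n}\one_B(x_1+\cdots+x_n)\,p(x)\,\l^n(\dint x)$, performs the substitution $\Phi$, and integrates the inner $n-1$ variables first.

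There is no serious obstacle here; the statement is a routine consequence of the behaviour of absolutely continuous measures under linear maps and marginalization. The only point requiring minimal care is measurability and the interchange of integration order, both of which are guaranteed by Tonelli's theorem since the integrand is non-negative. Accordingly, I expect the proof to be short, and indeed the paper's own pattern (compare the ``The proof is standard'' remark following Lemma \ref{eq:abs cont0}) suggests the authors regard this as immediate.
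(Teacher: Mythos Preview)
Your proposal is correct and is exactly the argument the paper has in mind: the authors' one-line proof simply reads ``The claim follows by means of the transformation theorem,'' and your linear change of variables $\Phi$ together with marginalization via Tonelli is precisely how that is cashed out. There is nothing to add.
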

\begin{proof}
The claim follows by means of the transformation theorem.
\end{proof}

\section*{Acknowledgement}
We would like to thank Johanna F.~Ziegel and Anja M\"uhlemann for inspiring discussions. TF acknowledges funding by the Swiss National Science Foundation (SNF) via grant 152609. 

\bibliographystyle{siam}
\bibliography{Biblio_InfiniteChaos2}

\begin{thebibliography}{10}

\bibitem{AzaisRandTrigPol}
{\sc J.-M. Aza\"is, F.~Dalmao, and J.~R. Le\'on}, {\em {CLT} for the zeros of
  classical random trigonometric polynomials}, Ann. Inst. H. Poincar\'e Probab.
  Statist., 52 (2016), pp.~804--820.

\bibitem{BarndorffNielsenEtAl1}
{\sc O.~E. Barndorff-Nielsen, J.~M. Corcuera, and M.~Podolskij}, {\em Power
  variation for {G}aussian processes with stationary increments}, Stochastic
  Process. Appl., 119 (2009), pp.~1845--1865.

\bibitem{BiermeBonamiLeon2011}
{\sc H.~Bierm{\'e}, A.~Bonami, and J.~R. Le{\'o}n}, {\em Central limit theorems
  and quadratic variations in terms of spectral density}, Electron. J. Probab.,
  16 (2011), pp.~362--395.

\bibitem{EstradeLeon}
{\sc A.~Estrade and J.~Leon}, {\em A central limit theorem for the {E}uler
  characteristic of a {G}aussian excursion set}, to appear in Ann. Probab.,
  (2016+).

\bibitem{HugLastSchulte}
{\sc D.~Hug, G.~Last, and M.~Schulte}, {\em Second-order properties and central
  limit theorems for geometric functionals of {B}oolean models}, Ann. Appl.
  Probab., 26 (2016), pp.~73--135.

\bibitem{Janson}
{\sc S.~Janson}, {\em Gaussian {H}ilbert {S}paces}, vol.~129 of Cambridge
  Tracts in Mathematics, Cambridge University Press, Cambridge, 1997.

\bibitem{NourdinPeccati09}
{\sc I.~Nourdin and G.~Peccati}, {\em Stein's method on {W}iener chaos},
  Probab. Theory Related Fields, 145 (2009), pp.~75--118.

\bibitem{NourdinPeccatiBook}
{\sc I.~Nourdin and G.~Peccati}, {\em {N}ormal {A}pproximations with
  {M}alliavin {C}alculus - {F}rom {S}tein's {M}ethod to {U}niversality},
  Cambridge University Press, Cambridge, new.~ed., 2012.

\bibitem{NourdinPeccatiOptimalRates}
{\sc I.~Nourdin and G.~Peccati}, {\em The optimal fourth moment theorem}, Proc.
  Amer. Math. Soc., 143 (2015), pp.~3123--3133.

\bibitem{NourdinPeccatiPodolskij}
{\sc I.~Nourdin, G.~Peccati, and M.~Podolskij}, {\em Quantitative
  {B}reuer-{M}ajor theorems}, Stochastic Process. Appl., 121 (2011),
  pp.~793--812.

\bibitem{NourdinPeccatiReinertSecondOrderPoincare}
{\sc I.~Nourdin, G.~Peccati, and G.~Reinert}, {\em Second order {P}oincar\'e
  inequalities and {CLT}s on {W}iener space}, J. Funct. Anal., 257 (2009),
  pp.~593--609.

\bibitem{Nualart}
{\sc D.~Nualart}, {\em The {M}alliavin {C}alculus and {R}elated {T}opics},
  Probability and its Applications (New York), Springer-Verlag, Berlin,
  second~ed., 2006.

\bibitem{PeccatiTaqquBook}
{\sc G.~Peccati and M.~S. Taqqu}, {\em Wiener {C}haos: {M}oments, {C}umulants
  and {D}iagrams}, vol.~1 of Bocconi \& Springer Series, Springer, Milan;
  Bocconi University Press, Milan, 2011.

\end{thebibliography}

\end{document}